\title{Certifying the Thurston norm via \\ SL(2,\kern 2ptC)-twisted homology}
\author{Ian Agol}
\address{ University of California, Berkeley \\
  970 Evans Hall \#3840 \\
  Berkeley, CA, 94720-3840\\
  USA
}
\email{ianagol@berkeley.edu}
\urladdr{http://math.berkeley.edu/~ianagol/}
\author{Nathan M. Dunfield}
\address{ Dept.~of Math., MC-382 \\
          University of Illinois \\
          1409 W. Green St. \\
          Urbana, IL 61801 \\ 
          USA
}
\email{nathan@dunfield.info}
\urladdr{http://dunfield.info}
\newcommand{\Fxy}{\langle x, y \rangle}
\newcommand{\Fab}{\pair{a, b}}
\newcommand{\ZFxy}{\Z[\Fxy ]}
\newcommand{\Ea}{E_\alpha}
\newcommand{\Rpm}{R_\pm}
\newcommand{\Rmp}{R_\mp}
\newcommand{\Rp}{R_+}
\newcommand{\Rm}{R_-}
\newcommand{\Mpm}{M_\pm}
\newcommand{\Mmp}{M_\mp}
\newcommand{\Npm}{N_\pm}
\newcommand{\Np}{N_+}
\newcommand{\Nm}{N_-}
\newcommand{\Xtilp}{\Xtil_+}
\newcommand{\Xtilm}{\Xtil_-}
\newcommand{\Xtilpm}{\Xtil_\pm}
\newcommand{\Rbarp}{\Rbar_+}
\newcommand{\KT}[2]{\mathit{KT}_{#1, #2}}
\DeclareMathOperator{\interior}{int}
\begin{document}

\begin{dedication} 
In memory of Bill Thurston: his amazing mathematics will live on, \\
but as a collaborator, mentor, and friend he is sorely missed.  
\end{dedication}

\begin{abstract} 
  We study when the Thurston norm is detected by twisted Alexander
  polynomials associated to representations of the 3-manifold group to
  $\SL{2}{\C}$.  Specifically, we show that the hyperbolic torsion
  polynomial determines the genus for a large class of hyperbolic
  knots in $S^3$ which includes all special arborescent knots and many
  knots whose ordinary Alexander polynomial is trivial.  This theorem
  follows from results showing that the tautness of certain sutured
  manifolds can be certified by checking that they are a product from
  the point of view of homology with coefficients twisted by an
  $\SL{2}{\C}$-representation.
\end{abstract}
\maketitle

\section{Introduction}

For a compact orientable \3-manifold $M$, the Thurston norm on
$H_2(M, \partial M; \Z) \cong H^1(M; \Z)$ measures the minimal
topological complexity of a surface representing a particular homology
class.  Twisted Alexander polynomials are a powerful tool for studying
the Thurston norm; such a polynomial $\tau(M, \phi, \alpha)$ depends
on a class $\phi \in H^1(M; \Z)$ and a representation
$\alpha \maps \pi_1(M) \to \GL{}(V)$, where $V$ is a
finite-dimensional vector space over a field $K$.  The polynomial
$\tau(M, \phi, \alpha)$ is constructed from the homology with
coefficients twisted by $\alpha$ of the cyclic cover of $M$ associated
to $\phi$.  The degree of any such
$\tau(M, \phi, \alpha) \in K[t^{\pm1}]$ gives a lower bound on the
Thurston norm of $\phi$ \cite{FriedlKim2008}.  Remarkably, Friedl and
Vidussi \cite{FriedlVidussi2012} showed that given $M$ and $\phi$ one
can always choose $\alpha$ so that this lower bound is sharp, with the
possible exception of when $M$ is a closed graph manifold; their
results rely on the fact that most Haken \3-manifold groups are full
of cubulated goodness \cite{Wise2012, Liu2013, PrzytyckiWise2012,
  PrzytyckiWise2013} so that \cite{Agol2008} applies.

Here, we explore whether one can get sharp bounds from just
representations to $\SL{2}{\C}$, especially those that originate in
a hyperbolic structure on $M$. When $M$ is the exterior of a
hyperbolic knot $K$ in $S^3$, there is a well-defined \emph{hyperbolic
  torsion polynomial} $\cT_K \in \C[t^{\pm 1}]$ which is (a
refinement of) the twisted Alexander polynomial associated to a lift
to $\SL{2}{\C}$ of the holonomy representation
$\pi_1(M) \to \Isom^+(\H^3) = \PSL{2}{\C}$.  The experimental
evidence in \cite{DunfieldFriedlJackson2012} forcefully led to

\begin{conjecture}[\cite{DunfieldFriedlJackson2012}] 
  \label{conj:knots}
  For a hyperbolic knot in $S^3$, the hyperbolic torsion polynomial
  determines the Seifert genus $g(K)$; precisely, $\deg{\cT_K} = 4
  g(K) - 2$.
\end{conjecture}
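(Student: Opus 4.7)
The plan is to recast the conjecture as a sutured-manifold question and then propagate a twisted-homological condition through a sutured hierarchy. Let $M = S^3 \smallsetminus \nu(K)$ and fix a minimal-genus Seifert surface $S$. Cutting $M$ along $S$ produces a sutured manifold $(N, \gamma)$ with $\Rp \cong \Rm \cong S$; tautness is immediate from genus-minimality. The standard degree bound for twisted Alexander polynomials already gives $\deg \cT_K \leq 4g(K) - 2$, so only a matching lower bound is needed. Via the Mayer--Vietoris computation for the infinite cyclic cover of $M$ built from copies of $N$ glued along $\Rpm$, the sharpness of this bound is equivalent to the vanishing of the relative twisted homology groups $H_*(N, \Rp; V_\alpha)$, where $V_\alpha = \C^2$ carries a chosen $\SL{2}{\C}$-lift of the holonomy restricted to $\pi_1(N)$. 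Equivalently, both inclusions $\Rpm \hookrightarrow N$ should induce isomorphisms on $V_\alpha$-twisted homology; call such an $(N,\gamma)$ a \emph{twisted product sutured manifold}.

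To certify that $(N,\gamma)$ is a twisted product I would follow Gabai and pick a taut sutured hierarchy $(N_0, \gamma_0) \rightsquigarrow (N_1, \gamma_1) \rightsquigarrow \cdots \rightsquigarrow (N_k, \gamma_k)$ terminating in a topological product sutured manifold, each step a decomposition along a properly embedded surface $F_i \subset N_i$. A topological product is trivially a twisted product, so reverse induction reduces the argument to a single inductive step: whenever $(N_{i+1}, \gamma_{i+1})$ is a twisted product and the surface $F_i$ satisfies $H_*(F_i; V_\alpha|_{\pi_1(F_i)}) = 0$, a Mayer--Vietoris diagram chase should show $(N_i, \gamma_i)$ is again a twisted product. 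Thus everything reduces to a vanishing statement for the twisted homology of the decomposition surfaces, which would follow once the restriction of $\alpha$ to each $\pi_1(F_i)$ has no nonzero invariant vector.

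The main obstacle is exactly this twisted-acyclicity along the hierarchy. The holonomy $\alpha$ is faithful and irreducible on $\pi_1(M)$, but its restriction to a subsurface group $\pi_1(F_i)$ need not be: if $\alpha|_{\pi_1(F_i)}$ fixes a line in $\C^2$ (or, worse, factors through an abelian quotient), then $H_0$ or $H_1$ of $F_i$ with these coefficients can fail to vanish and the inductive step breaks. For the class treated in this paper, the hierarchy can be arranged to decompose along disks and annuli only, and on these the vanishing reduces to the elementary statement that $\alpha$ is non-abelian. Extending the argument to all hyperbolic knots appears to require a genuinely new ingredient, either a very judicious choice of hierarchy that avoids subsurfaces on which the holonomy has invariant vectors, or a deformation argument showing that the set of representations for which the induction goes through is a Zariski-dense subset of the character variety which contains the discrete faithful holonomy.
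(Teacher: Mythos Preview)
First, note that the statement is Conjecture~\ref{conj:knots}, which the paper does \emph{not} prove in full: it remains open in general, and the paper establishes it only for libroid knots (Theorem~\ref{thm:sharpknots}). Your first paragraph correctly carries out the translation to the sutured setting and matches Theorem~\ref{thm:torsionconn}: sharpness of the degree bound is equivalent to the sutured complement $X \setminus S$ being an $\alpha$-homology product. From there on, however, your approach diverges from the paper and does not close.

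Your hierarchy-plus-reverse-induction scheme founders exactly where you say it does, and the slip is already visible in the annulus case: the twisted homology of an annulus with core $\gamma$ vanishes if and only if $\tr\big(\alpha(\gamma)\big) \neq 2$ (Lemma~\ref{lem:circle}), which is not the same as ``$\alpha$ is non-abelian''---a single curve sent to a trace-$2$ parabolic kills the step. The paper does not run a general Gabai hierarchy at all. For libroid knots, parallel copies of the Seifert surface cut the sutured complement into books of $I$-bundles, and Theorem~\ref{thm:torusguts} reduces everything to the trace condition on the cores of the gluing annuli. That condition is then secured by geometric input you did not invoke: Fenley's theorem that a non-fiber minimal-genus Seifert surface is quasi-Fuchsian (so the only embedded curve on it with parabolic holonomy is $\partial S$), together with the fact that the longitude always has trace $-2$ rather than $+2$. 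As for salvaging the general case by deformation or a clever choice of hierarchy, Theorem~\ref{thm:example} shows the obstruction is not an artifact: there is a taut acylindrical sutured manifold with a discrete, faithful, purely hyperbolic $\SL{2}{\C}$-representation that is nonetheless \emph{not} a homology product.
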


\noindent 
Here, we prove this conjecture for a large class of knots, which
includes infinitely many knots whose ordinary Alexander
polynomial is trivial.  We call a knot $K \subset S^3$
\emph{libroid} if there is a collection $\Sigma$ of disjointly embedded
minimal genus Seifert surfaces in its exterior $X = S^3 \setminus
N(K)$ so that $X \setminus \Sigma$ is a union
of books of $I$-bundles in a way that respects the structure of $X
\setminus \Sigma$ as a sutured manifold; see Section~\ref{sec:libdef}
for the precise definitions.   We show
\begin{restatable*}{theorem}{theoremsharpknots}\label{thm:sharpknots}
  Conjecture~\ref{conj:knots} holds for libroid hyperbolic knots in $S^3$.
\end{restatable*}

\noindent
Libroid knots generalize the notion of a fibroid surface introduced in
\cite{CullerShalen1994}, and includes all fibered knots.  The class of
libroid knots is closed under Murasugi sum (Lemma~\ref{lem:murasugi})
and contains all \emph{special} arborescent knots obtained from
plumbing oriented bands (this includes \2-bridge knots), as well as
many knots whose ordinary Alexander polynomial is trivial
(Theorem~\ref{thm:hyplib}).  Previous to Theorem~\ref{thm:sharpknots},
Conjecture~\ref{conj:knots} was known only in the case of 2-bridge
knots, by work of Morifuji and Tran \cite{Morifuji2012,
  MorifujiTran2014}.

\subsection{Motivation} While twisted Alexander polynomials give sharp
bounds on the Thurston norm if one allows arbitrary representations to
$\GL{n}{\C}$ by \cite{FriedlVidussi2012}, there are still compelling
reasons to consider questions such as Conjecture~\ref{conj:knots}.
First, if the Thurston norm is detected by representations of
uniformly bounded degree, then one should be able to use ideas from
\cite{Kuperberg2014} to show that the KNOT GENUS problem of
\cite{AgolHassThurston2006} is in $\NP \cap \coNP$ for knots in $S^3$
using a finite-field version of $\tau(M, \phi, \alpha)$ as the $\coNP$
certificate.  (As with the results in \cite{Kuperberg2014}, this would
be conditional on the Generalized Riemann Hypothesis.)  Second, since
$\cT_K$ is easily computable in practice, a proof of
Conjecture~\ref{conj:knots} should lead to an effectively
polynomial-time algorithm for computing $g(K)$ for knots in $S^3$.
Finally, Conjecture~\ref{conj:knots} would be another beautiful
Thurstonian connection between the topology and geometry of
3-manifolds.

\subsection{Sutured manifolds}  

The Thurston norm bounds associated to twisted Alexander polynomials
can be understood in the following framework of \cite{FriedlKim2013}.
Throughout, see Section~\ref{sec:objects} for precise definitions.
Let $M = (M, R_-, R_+, \gamma)$ be a sutured manifold.  Given a
representation $\alpha \maps \pi_1(M) \to \GL{}(V)$, we say that $M$
is an $\alpha$-homology product if the inclusion-induced maps
\[
H_*(R_+; \Ea) \to H_*(M;  \Ea) \mtext{and} H_*(R_-; \Ea) \to H_*(M, \Ea)
\]
are all isomorphisms; here $\Ea$ is the system of local coefficients
associated to $\alpha$.  An $\alpha$-homology product is necessarily a
taut sutured manifold (see Theorem~\ref{thm:Eprodistaut} for the
precise statement).  Conversely, every taut sutured manifold is an
$\alpha$-homology product for \emph{some} representation $\alpha$ by
\cite{FriedlKim2013}.  A
weaker, less geometric, parallel to Conjecture~\ref{conj:knots} is
\begin{conjecture}\label{conj:sutured}
  For a taut sutured manifold $M$, there exists $\alpha
  \maps \pi_1(M) \to \SL{2}{\C}$ for which $M$ is a homology product.  
\end{conjecture}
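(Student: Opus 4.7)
The plan is to combine a geometric decomposition of $M$ with a representation-theoretic gluing argument, using the libroid case of Theorem~\ref{thm:sharpknots} as a template. First I would reduce to the case where $M$ is irreducible with incompressible $\Rpm$; since product sutured manifolds trivially satisfy the conclusion for any $\alpha$, the content lies in the non-product case. Decomposing $M$ along its JSJ tori and essential annuli (arranged, after isotopy, to be compatible with the sutured structure) yields pieces that are either hyperbolic or Seifert fibered, and the strategy is to assemble a global $\alpha$ from suitable choices on each piece.

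For a hyperbolic piece $M_i$ the natural candidate is the $\SL{2}{\C}$ lift of the holonomy representation associated to a complete hyperbolic structure on its interior (with totally geodesic or cusped behavior along the boundary, as appropriate). The goal is to verify that the inclusion $\Rpm \cap M_i \hookrightarrow M_i$ induces an $\Ea$-homology isomorphism, equivalently that $H_*(M_i, \Rpm \cap M_i; \Ea) = 0$. Poincar\'e--Lefschetz duality converts this into a dual vanishing statement, which one would establish using irreducibility of the holonomy representation together with a controlled cell decomposition of $M_i$ relative to $\Rpm \cap M_i$. This is the technical heart of the approach, paralleling the book-of-$I$-bundles calculation that presumably underlies the libroid case of Theorem~\ref{thm:sharpknots}.

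For Seifert fibered pieces the $\SL{2}{\C}$ character variety has its own constraints---representations factor through the base $2$-orbifold, modulo the central image of the regular fiber---and one must choose representations on each piece whose restrictions to the decomposing tori and annuli agree, up to conjugation, with those coming from the adjacent hyperbolic pieces, so that they assemble into a genuine $\alpha \maps \pi_1(M) \to \SL{2}{\C}$. Formally, one works inside the fibered product of the pieces' character varieties over the character varieties of the decomposing surfaces, and seeks a point in the open sublocus where the piecewise homology product condition is satisfied.

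The main obstacle I expect is precisely this matching across JSJ surfaces. The holonomy of a hyperbolic piece is essentially determined by its geometry, so its restrictions to boundary tori and annuli are largely prescribed, and there may simply not be enough freedom on the Seifert side to match them while preserving the homology product condition. Overcoming this will likely require deforming away from the strict hyperbolic holonomy, using the positive-dimensional character variety deformations available near any toral or annular boundary and tracking how the homology product condition varies. The closed graph manifold exception in the $\GL{n}{\C}$ theorem of Friedl and Vidussi \cite{FriedlVidussi2012} should not block the conjecture here, since any nontrivial sutured structure breaks the pure graph manifold topology, but this merits verification; alternatively, and perhaps more promisingly, one might bypass hyperbolic holonomy entirely and build $\alpha$ from a cubulation of $\pi_1(M)$ together with the virtual specialness machinery of \cite{Agol2008, Wise2012}, promoting a finite quotient representation of a specially-cubulated cover to an $\SL{2}{\C}$ representation realizing the desired homology vanishing.
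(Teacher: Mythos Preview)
This statement is a \emph{conjecture}, and the paper does not prove it in general.  What the paper does prove are two special cases: Theorem~\ref{thm:torusguts} (taut sutured books of $I$-bundles, under a mild trace condition) and Theorem~\ref{thm:handlebody} (certain acylindrical genus-$2$ handlebodies with one suture).  Your proposal is therefore not to be compared against a proof in the paper, but rather read as a proposed attack on an open problem.

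Viewed that way, the proposal has a genuine gap at exactly the step you flag as the ``technical heart'': establishing $H_*(M_i, \Rpm \cap M_i; \Ea) = 0$ for a hyperbolic piece $M_i$ using the holonomy of a complete hyperbolic structure.  The paper's Theorem~\ref{thm:example} (with details in Theorem~\ref{thm:locus}) shows that this fails already for a single piece with no JSJ gluing to worry about.  The example is a taut sutured genus-$2$ handlebody $M$, acylindrical relative to its sutures, equipped with a discrete, faithful, purely hyperbolic $\alpha \maps \pi_1(M) \to \SL{2}{\C}$ for which $M$ is \emph{not} an $\alpha$-homology product.  Indeed, Theorem~\ref{thm:locus} computes the entire failure locus in the $\SL{2}{\C}$ character variety of $M$ and finds it to be an affine hyperplane, meeting the discrete faithful locus.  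So irreducibility (or even discreteness and faithfulness) of $\alpha$ does not by itself force the relative twisted homology to vanish, and a Poincar\'e--Lefschetz argument of the type you sketch cannot succeed without substantial additional input.

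Your instinct that one must ``deform away from the strict hyperbolic holonomy'' is therefore not merely a gluing convenience but essential already at the level of a single piece; the paper's proof of Theorem~\ref{thm:handlebody} does exactly this, choosing $\alpha$ not from a hyperbolic structure on $M$ itself but from the holonomy of the hyperbolic manifold obtained by attaching a $2$-handle along the suture (Theorem~\ref{thm:torihomprod} plus Lemma~\ref{lem:2handle}).  A general argument along your lines would need, for each piece, a mechanism for selecting a good point in its character variety, and the paper offers no such mechanism beyond these special cases.
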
 
Theorem~\ref{thm:sharpknots} will follow easily from the next result,
establishing a strong version of Conjecture~\ref{conj:sutured} for
books of $I$-bundles (see Section~\ref{sec:books} for the definitions).
\begin{restatable*}{theorem}{theoremtorusguts}\label{thm:torusguts}
  Let $M$ be a taut sutured manifold which is a book of
  $I$-bundles.  Suppose $\alpha \maps \pi_1(M) \to \SL{2}{\C}$ has
  $\tr\big(\alpha(\gamma)\big) \neq 2$ for every curve $\gamma$ which is the
  core of a gluing annulus for an $I$-bundle page.  Then $M$ is an
  $\alpha$-homology product.
\end{restatable*}
\noindent
In trying to attack Conjecture~\ref{conj:sutured}, an intriguing
aspect of Theorem~\ref{thm:torusguts} is the very weak hypotheses on
the representation $\alpha$.  Unfortunately, for more complicated taut
sutured manifolds one must put additional restrictions on $\alpha$ to
get a homology product, as the next result shows.
\begin{restatable*}{theorem}{theoremsuturedexample}\label{thm:example}
  There exists a taut sutured manifold $M$ with a faithful
  discrete and purely hyperbolic representation
  $\alpha \maps \pi_1(M) \to \SL{2}{\C}$ where $M$ is \emph{not} an
  $\alpha$-homology product.  The manifold $M$ is acylindrical with
  respect to the pared locus consisting of the sutures.
\end{restatable*}
\noindent 
Another instance where we can prove Conjecture~\ref{conj:sutured} is
\begin{restatable*}{theorem}{theoremhandlebody}
  \label{thm:handlebody}
  Suppose $M$ is a sutured manifold which is a genus 2 handlebody with
  suture set $\gamma$ a single curve separating $\partial M$ into two
  once-punctured tori.  If the pared manifold $(M, \gamma)$ is
  acylindrical and $M \setminus \gamma$ is incompressible, then $M$ is
  a homology product with respect to some 
  $\alpha \maps \pi_1(M) \to \SL{2}{\C}$.  
\end{restatable*}
\noindent
With both Theorems~\ref{thm:torusguts} and \ref{thm:handlebody}, it is
easy to construct sutured manifolds satisfying their hypotheses which are
\emph{not} homology products with respected to $H_*( \ \cdot \ ; \Q)$.

\subsection{Outline of contents} 

After reviewing the needed definitions in Section~\ref{sec:objects},
we establish the basic properties of homology products in
Section~\ref{sec:basics} and so relate Conjectures~\ref{conj:knots}
and \ref{conj:sutured}.  Section~\ref{sec:products} is devoted to
proving Conjecture~\ref{conj:sutured} in the two cases mentioned
above.  Section~\ref{sec:example} studies one sutured manifold in
detail, characterizing which $\SL{2}{\C}$-representations make it a
homology product (Theorem~\ref{thm:locus}); Theorem~\ref{thm:example}
is an easy consequence of this.  Finally, Section~\ref{sec:libroid} is
devoted to studying libroid knots, both showing that this is a large
class of knots and also proving Theorem~\ref{thm:sharpknots} follows
from Theorem~\ref{thm:torusguts}.

\subsection{Acknowledgements} 

Agol was partially supported by the Simons Foundation and US NSF
grants DMS-1105738 and DMS-1406301.  Dunfield was partially supported
by US NSF grant DMS-1106476, a Simons Fellowship, the GEAR Network (US
NSF grant DMS-1107452) and this work was partially completed while
visiting ICERM (Brown University) and the University of Melbourne.  We
thank Stefan Friedl for several helpful discussions, and we are very
grateful to the referee for an extraordinarily quick and yet very 
thorough review of this paper.

\section{Background}\label{sec:objects}

We begin with the precise definitions of the basic objects we will be
working with.  Throughout, all manifolds will be assumed orientable
and moreover oriented. 

\subsection{Taut surfaces} For a connected surface, define
$\eulerminus{S} = \max\big( -\chi(S), 0 \big)$; extend this to all
surfaces via
$\eulerminus{S \sqcup S'} = \eulerminus{S} + \eulerminus{S'}$.  For a
\3-manifold $M$ and a (possibly empty) subsurface $A \subset \bdry M$,
the Thurston norm of $z \in H_2(M, A; \Z)$ is defined by
\[
\norm{ z } = \min \setdef{ \eulerminus{S} }{\mbox{$S$ is a properly
    embedded surface representing $z$ with $\bdry S \subset A$}}
\]
A properly embedded compact surface $S$ in a
\3-manifold $M$ is \emph{taut} if $S$ is incompressible and
realizes the Thurston norm for the class $[S, \partial S]$ in
$H_2\big(M, N(\partial S) ; \Z \big)$.

\subsection{Sutured manifolds} A sutured manifold
$(M, R_+, R_-, \gamma)$ is a compact \3-manifold with a partition of
$\partial M$ into two subsurfaces $R_+$ and $R_-$ along their common
boundary $\gamma$.  The surface $R_+$ is oriented by the
outward-pointing normal, and $R_-$ is oriented by the inward pointing
one.  Note that the orientations of $\Rpm$ induce a common orientation
on $\gamma$.  A sutured manifold is \emph{taut} if it is irreducible
and the surfaces $\Rpm$ are both taut.  A connected sutured manifold
$M$ is \emph{balanced} if it is irreducible, $\chi(R_-) = \chi(R_+)$,
not a solid torus with $\gamma = \emptyset$, and if any component of
$\Rpm$ has positive $\chi > 0$ then $M$ is $D^3$ with a single suture.
A disconnected sutured manifold is balanced if each connected
component is.  Note that any taut sutured manifold is necessarily
balanced.

\subsection{Notes on conventions}  
We follow \cite{Scharlemann1989} in requiring taut surfaces to be
incompressible; this is not universal, and the difference is just that
the more restrictive definition excludes a solid torus with no sutures
and a ball with more than one suture. Like \cite{FriedlKim2013} but
unlike many sources, we do not allow torus sutures consisting of an
entire torus component of $\partial M$.  Our definition of balanced is
slightly more restrictive than that of \cite{FriedlKim2013} and also
differs from that of \cite{Juhasz2008}.

\subsection{Twisted homology}\label{sec:twisted_homology}

Suppose $X$ is a connected CW complex with a representation
$\alpha \maps \pi_1(X) \to \GL{}(V)$, where $V$ is a vector space over
a field $K$.  Let $\Ea$ be the system of local coefficients over $X$
corresponding to $\alpha$; precisely, $\Ea \to X$ is the induced
vector bundle where we give each fiber the \emph{discrete topology} so
that $\Ea \to X$ is actually a covering map.  (Alternatively, you can
view $\Ea$ as an ordinary vector bundle equipped with a flat
connection.)  Throughout, we use the geometric definition of homology
with local coefficients $H_*(X; \Ea)$ given in
\cite[pg.~330--336]{Hatcher2002} which does not require a choice of
basepoint; it is equivalent to the more algebraic definition of
e.g.~\cite[pg.~328--330]{Hatcher2002}.  More generally, if $X$ is not
connected, we can consider a bundle $E \to X$ with fiber $V$ and the
associated homology $H_*(X; E)$.  We also use the geometrically
defined cohomology $H^*(X; E)$ of \cite[pg.~333]{Hatcher2002}. Of
course, both $H_*(X; E)$ and $H^*(X; E)$ satisfy all the usual
properties: a relative version for $(X, A)$, long exact sequence of a
pair, Mayer-Vietoris, etc.

If $X$ is a compact oriented $n$-manifold with $\partial X$
partitioned into two submanifolds with common boundary $A$ and $B$
then one has Poincar\'e duality:
\begin{equation}\label{eq:poincare}
D_M \maps H^k(X, A; E) \xrightarrow{\ \cong \ } H_{n-k}(X, B; E)
\end{equation}
where $D_M$ is given by cap product with the ordinary relative
fundamental class $[X, \partial X] \in H_n(X, \partial X; \Z)$.

Let $E^* \to X$ denote the bundle where we have replaced each fiber
with its dual vector space; for $\Ea$, this corresponds to using the
dual or contragredient representation
$\alpha^* \maps \pi_1(X) \to \GL{}{(V^*)}$ defined by
$\alpha^*(g) = \big(\rho(g^{-1})\big)^*$.  When $X$ has finitely many
cells, the relevant version of universal coefficients is that
$H_k(X; E) \cong H^k(X ; E^*)$ as $K$-vector spaces.

When $E^* \cong E$ as bundles over $X$, we say that $E$ is self-dual.
Examples include $\Ea$ where $\alpha \maps \pi_1(X) \to \SL{2}{K}$;
specifically, $\alpha^*$ is conjugate to $\alpha$ via
$\mysmallmatrix{0}{1}{-1}{0}$.  Seen another way, the action of
$\SL{2}{K}$ on $K^2$ preserves the standard symplectic form
$x_1 y_2 - x_2 y_1$ and hence $\Ea$ has a nondegenerate inner product on
each fiber allowing us to identify $\Ea$ with $\Ea^*$.
Representations that are unitary with respect to some involution on
$K$ may not be self-dual, but still satisfy
$H_*(X, A; \Ea) \cong H^*(X, A; \Ea)$, as $K$-vector spaces, for any
$A \subset X$; we call such representations/bundles
\emph{homologically self-dual}.

\section{Basics of twisted homology products}\label{sec:basics}

Throughout this section, $E$ will be a system of local coefficients
over a sutured manifold $M$ with fiber a vector space of dimension
$n \geq 1$.  As in the introduction, we say that $M$ is an
$E$-\emph{homology product} if the inclusion induced maps
$H_*(\Rpm; E) \to H_*(M; E)$ are both isomorphisms.  This is
equivalent to the notion of an $E$\hyp \emph{cohomology product} where
$H^*(M; E) \to H^*(\Rpm; E)$ are isomorphisms: the former is the same
as $H_*(M, \Rpm; E) = 0$, the latter is the same as
$H^*(M, \Rpm; E) = 0$, and by Poincar\'e duality one has
$H_{k}(M, \Rpm; E) \cong H^{3-k}(M, R_\mp; E)$.  These concepts are
parallel to \cite{FriedlKim2013}, where they consider unitary
representations of balanced sutured manifolds where
$H_1(M, \Rm; \Ea) = 0$ because of:
\begin{proposition}\label{prop:homprodcond}
  Suppose $M$ is a connected balanced sutured manifold with both
  $\Rpm$ nonempty.  If $E$ is homologically self-dual, then $M$ is an
  $E$-homology product if and only if any one of the following eight
  groups vanish: $H_k(M, \Rpm; E)$ and $H^k(M, R_\pm; E)$ for
  $1 \leq k \leq 2$.
\end{proposition}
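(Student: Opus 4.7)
The plan is to show that all eight listed groups share a common dimension; combined with automatic vanishing in degrees $0$ and $3$, this reduces the $E$-homology product condition to the vanishing of any single one of them. The ingredients are Poincar\'e duality~\eqref{eq:poincare}, the homological self-duality of $E$, and a balanced-manifold Euler characteristic calculation.

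First, I would collapse the cohomology versions onto the homology ones via homological self-duality: $H^k(M, \Rpm; E) \cong H_k(M, \Rpm; E)$, so it suffices to analyze the four homology groups. Poincar\'e duality followed once more by self-duality then gives
\[
H_k(M, \Rpm; E) \;\cong\; H^{3-k}(M, \Rmp; E) \;\cong\; H_{3-k}(M, \Rmp; E),
\]
pairing $H_1(M, \Rp; E)$ with $H_2(M, \Rm; E)$ and $H_1(M, \Rm; E)$ with $H_2(M, \Rp; E)$.

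Second, I would dispose of the boundary degrees. Since $M$ is connected and each of $\Rpm$ is nonempty, a flat section of $E$ over $M$ that vanishes on $\Rpm$ must be zero by parallel transport from any point of $\Rpm$, so $H^0(M, \Rpm; E) = 0$. Poincar\'e duality and self-duality then kill $H_0$, $H_3$, $H^0$, and $H^3$ of $(M, \Rpm; E)$ for both signs. The long exact sequence of the pair therefore says $M$ is an $E$-homology product precisely when $H_1(M, \Rpm; E) = H_2(M, \Rpm; E) = 0$ for both signs.

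Third, since $\chi(M) = \tfrac{1}{2}\chi(\partial M) = \tfrac{1}{2}\bigl(\chi(\Rp) + \chi(\Rm)\bigr)$ for a compact oriented \3-manifold and $\chi(\Rp) = \chi(\Rm)$ by the balanced hypothesis, one obtains $\chi(M, \Rpm) = 0$. Twisting by a local system of rank $n$ multiplies Euler characteristics by $n$, so using the vanishings from the previous step,
\[
0 \;=\; n \cdot \chi(M, \Rpm) \;=\; \dim H_2(M, \Rpm; E) - \dim H_1(M, \Rpm; E).
\]
Combining this equality of dimensions with the Poincar\'e pairing above shows that all four homology groups---and hence all eight of the listed groups---have a common dimension, so the vanishing of any one of them forces them all to vanish.

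I expect no serious obstacle beyond bookkeeping; the single genuinely twisted input is the identity $H^0(M, \Rpm; E) = 0$, which requires only connectedness of $M$ and nonemptiness of $\Rpm$, not any property of the coefficient system itself. The balanced hypothesis is used exactly to make the alternating sum of the middle dimensions vanish, and self-duality is used exactly to move between the four $+/-$ and homology/cohomology variants.
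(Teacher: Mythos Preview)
Your proof is correct and uses the same three ingredients as the paper's proof: vanishing in degrees $0$ and $3$ from nonemptiness of $\Rpm$, the Euler-characteristic identity coming from the balanced hypothesis, and the combination of Poincar\'e duality with homological self-duality to pass between the $\pm$ and homology/cohomology variants. The only difference is organizational: the paper fixes one vanishing group (say $H_1(M,\Rm;E)=0$), propagates via Euler characteristic to $H_2(M,\Rm;E)=0$, then via Poincar\'e and self-duality to $H_*(M,\Rp;E)=0$, declaring the other cases similar; you instead establish up front that all eight groups share a common dimension, which avoids the ``other cases are similar'' step and is arguably cleaner.
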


\begin{proof}
  Since both of $\Rpm$ are nonempty, it follows that
  $H_0(M, \Rpm; E) = H^0(M, \Rpm; E) = 0$, and so by Poincar\'e
  duality we have $H_3(M, \Rmp; E) = 0$.  We focus on the case where
  $H_1(M, \Rm; E) = 0$; the other cases are similar. Since $M$ is
  balanced, we have $\chi(\Rm) = \chi(M)$ and hence
  $\chi\big(H_*(M, \Rm; E)\big) = 0$.  Since we know that
  $H_k(M, \Rm; E) = 0$ for every $k \neq 2$, this forces
  $H_2(M, \Rm; E) = 0$ as well.  By Poincar\'e, we have
  $H^*(M, \Rp; E) = 0$.  Since $E$ is homologically self-dual, this
  gives $H_*(M, \Rp; E) = 0$, and so $M$ is an $E$-homology product as
  claimed.
\end{proof}

\noindent
Our motivation for studying twisted homology products is the following
two results:

\begin{theorem}[{\cite[\S 3]{FriedlKim2013}}]
  \label{thm:Eprodistaut}
  Suppose $M$ is an irreducible sutured manifold which is an $E$\hyp
  homology product and where no component of $M$ is a solid torus
  without sutures.  Then $M$ is taut.
\end{theorem}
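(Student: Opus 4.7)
My plan is to verify the conditions for tautness directly: $M$ is irreducible by assumption, so it remains to show $\Rp$ and $\Rm$ are each incompressible and Thurston-norm minimizing in their respective relative homology classes.

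I would begin with an Euler characteristic balance. From $H_*(M, \Rpm; E) = 0$ and the fact that the relative cellular chain complex with coefficients in $E$ (of fiber dimension $n \geq 1$) has Euler characteristic $n \cdot \chi_{\mathrm{top}}(M, \Rpm)$, we obtain $\chi(M) = \chi(\Rp) = \chi(\Rm)$. This numerical identity will drive the remaining steps, since for incompressible $\Rpm$ it yields $\chi_-(\Rpm) = -\chi(M)$.

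For incompressibility, suppose $D \subset M$ is a compressing disk for $\Rp$, with $\partial D$ essential in $\Rp$. Since $D$ is simply connected, $E|_D$ is canonically trivial, so $(D, \partial D)$ carries a fundamental class in $H_2(D, \partial D; E|_D) \cong V$. I would push this class forward to $H_2(M, \Rp; E)$ and certify nontriviality by pairing it, via the Poincar\'e--Lefschetz duality \eqref{eq:poincare}, with an intersection-dual cohomology class in $H^1(M, \Rm; E^*)$ built from a properly embedded arc in $M$ meeting $D$ transversely in a single point. A nonzero image contradicts the $E$-homology product hypothesis, so no such $D$ exists; the argument for $\Rm$ is symmetric.

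The main obstacle is norm-minimality. Suppose $\Rp$ fails to realize the Thurston norm: then there is a taut surface $T$ representing $[\Rp, \partial \Rp] \in H_2(M, \partial M; \Z)$ with $\chi_-(T) = -\chi(T) < -\chi(M)$, so $\chi(T) > \chi(M) = \chi(\Rp)$. After isotoping $T$ into efficient position and decomposing $M$ along it, one obtains sutured cobordisms $\Np$ (between $\Rp$ and a copy of $T$) and $\Nm$ (between $\Rm$ and $T$) with $2\chi(\Npm) = \chi(\Rpm) + \chi(T)$. The crucial step is to show that $\Np$ inherits the $E$-homology product structure: via a Mayer--Vietoris argument for $M = \Np \cup_T \Nm$ combined with the vanishing $H_*(M, \Rpm; E) = 0$, I would deduce $H_*(\Np, \Rp; E) = H_*(\Np, T; E) = 0$. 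Applying the Euler characteristic balance of the first step now to $\Np$ then forces $\chi(T) = \chi(\Rp) = \chi(M)$, contradicting $\chi(T) > \chi(M)$. The delicate work is setting up the Mayer--Vietoris so that the homology product condition propagates cleanly; this requires careful choice of the taut representative $T$ and bookkeeping of how $E$ restricts to $T$ and to the pieces $\Npm$, and is essentially the content of the original Friedl--Kim argument.
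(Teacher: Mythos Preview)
Your architecture---cut $M$ along a competing surface and analyze the pieces---is the paper's, but the key claim in your norm-minimality step is too strong. You assert that Mayer--Vietoris for $M = N_+ \cup_T N_-$ together with $H_*(M,R_\pm;E)=0$ gives $H_*(N_+, R_+; E) = H_*(N_+, T; E) = 0$, i.e.\ that $N_+$ inherits the full $E$-homology product structure. This is not what the argument yields. From the long exact sequence of the triple $(M, N_+, R_+)$, the vanishing of $H_*(M, R_+; E)$, and excision one gets $H_k(N_+, R_+; E) \cong H_{k+1}(N_-, T; E)$; combined with Poincar\'e duality this forces vanishing for $k \neq 1$, but the degree-$1$ piece is left unconstrained---the chain of dualities and triple sequences closes on itself without producing new information. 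The paper's Claim~\ref{claim:hom} (which \emph{is} the Friedl--Kim argument here) establishes only that $H_k(R_+) \to H_k(N_+)$ is an isomorphism for $k \neq 1$ and \emph{injective} for $k=1$, while $H_k(T) \to H_k(N_+)$ is an isomorphism for $k \neq 1$ and \emph{surjective} for $k=1$. That yields $\dim H_1(T) \geq \dim H_1(R_+)$ and hence only the inequality $\chi(T) \leq \chi(R_+)$, which is what contradicts $\chi(T) > \chi(R_+)$. Your plan to recover the equality $\chi(T) = \chi(R_+)$ by applying the Euler-characteristic balance to $N_+$ would require precisely the full product structure on $N_+$ that you have not, and cannot, establish.

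Your incompressibility step also diverges from the paper and is incomplete as written: producing an arc with endpoints on $R_-$ intersection-dual to a compressing disk $D$ presupposes both sides of $D$ can reach $R_-$, which can fail (e.g.\ $R_- = \emptyset$, or $D$ separates off a region missing $R_-$). The paper instead first disposes of sphere and disc components of $R_\pm$ directly, observes that torus components are incompressible by the no-solid-torus hypothesis, and then lets norm-minimality handle the remaining components of nonpositive Euler characteristic.
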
 

\begin{theorem}[{\cite[\S 4]{FriedlKim2013}}]
  \label{thm:torsionconn}
  Suppose $X$ is a compact irreducible \3-manifold with $\partial X$ a
  (possibly empty) union of tori.  For $\phi \in H^1(X; \Z)$
  nontrivial and $\alpha \maps \pi_1(X) \to \GL{}{(V)}$, the torsion
  polynomial $\tau(X, \phi, \alpha)$ gives a sharp lower bound on the
  Thurston norm $\norm{\phi}$ if and only if when $S$ is a taut
  surface without nugatory tori dual to $\phi$ the sutured manifold
  $M$ which is $X$ cut along $S$ is an $\alpha$-homology product.
\end{theorem}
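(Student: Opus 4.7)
The plan is to relate the torsion polynomial $\tau(X, \phi, \alpha)$ to the twisted homology of the sutured manifold $M$ by expressing the infinite cyclic cover $X_\phi \to X$ associated with $\phi$ as a union of translates of $M$. Cutting $X$ along $S$ yields $M$, and $X_\phi$ is built by gluing $\Z$-many copies $M_n$ of $M$ along their horizontal boundaries, identifying $\Rp$ of $M_n$ with $\Rm$ of $M_{n+1}$; the generating deck transformation $t$ sends $M_n$ to $M_{n+1}$. Applying Mayer--Vietoris to this decomposition yields a long exact sequence of $K[t^{\pm 1}]$-modules
\[
\cdots \to H_k(\Rm; \Ea) \otimes_K K[t^{\pm 1}] \xrightarrow{\iota_- - t \iota_+} H_k(M; \Ea) \otimes_K K[t^{\pm 1}] \to H_k(X_\phi; \Ea) \to \cdots
\]
where $\iota_\pm \maps H_k(\Rpm; \Ea) \to H_k(M; \Ea)$ are the inclusion-induced maps and the deck action identifies $\Rp \cong \Rm$. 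By standard Milnor--Turaev torsion techniques, $\tau(X, \phi, \alpha)$ can then be computed, up to a unit in $K[t^{\pm 1}]$, as an alternating product of determinants of these boundary operators (after a choice of bases reducing to the square-matrix case).

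Next I would bound the degree. For $K$-linear maps $A, B \maps V \to V$, the polynomial $\det(A - tB)$ has degree in $t$ at most $\dim V$, with equality iff $B$ is invertible, and has nonzero constant term iff $A$ is invertible; together these give that the Laurent-width $\max - \min$ of its exponents equals $\dim V$ exactly when both $A$ and $B$ are $K$-isomorphisms. Assembling the signed sum over $k$, using the balanced condition $\chi(\Rp) = \chi(\Rm) = \chi(M)$ together with $\sum_k (-1)^k \dim_K H_k(\Rpm; \Ea) = \dim(V) \cdot \chi(\Rpm)$, I would obtain $\deg \tau \leq \dim(V) \cdot \eulerminus{S} = \dim(V) \cdot \norm{\phi}$ after accounting for contributions from toroidal components of $\partial X$. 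Equality throughout forces each $\iota_\pm$ to be a $K$-linear isomorphism on every $H_k$, which is precisely the condition that $M$ is an $\alpha$-homology product; conversely, this condition makes every determinant in the alternating product nondegenerate at both $t = 0$ and $t = \infty$, so the bound is saturated.

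The hard part is the careful bookkeeping at the boundary: each torus in $\partial X$ contributes a factor to $\tau$ that must cancel against an analogous contribution from the sutures of $M$, and the hypothesis that $S$ has no nugatory tori is precisely what prevents a boundary-parallel torus component of $S$ from inflating $\eulerminus{S}$ while contributing trivially to the alternating product of determinants. A secondary subtlety is that the $K[t^{\pm 1}]$-modules in adjacent slots of the Mayer--Vietoris sequence need not have equal rank; one uses the exactness of the sequence together with the balanced Euler characteristic identity to reduce the degree comparison to a collection of square-matrix statements.
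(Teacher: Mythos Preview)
Your approach is genuinely different from the paper's. Rather than express $\tau$ as an alternating product of $\det(\iota_- - t\,\iota_+)$, the paper quotes the bound in the form
\[
\norm{\phi} \ \geq\ \frac{1}{n}\big(\dim H_1(\Xtil;\Ea) - \dim H_0(\Xtil;\Ea) - \dim H_2(\Xtil;\Ea)\big)
\]
and then argues directly with the homology of the infinite cyclic cover $\Xtil$, split by a single lift $\Stil$ of $S$ into two half-infinite pieces $\Xtilpm$.  The key tool is Lemma~\ref{lem:linchpin}: the deck action makes each $H_*(\Xtilpm;\Ea)\to H_*(\Xtil;\Ea)$ surjective, so sharpness of the bound forces the maps $H_*(\Stil)\to H_*(\Xtilpm)\to H_*(\Xtil)$ all to be isomorphisms, and one more Mayer--Vietoris for $\Xtil = \Xtilm' \cup M \cup \Xtilp$ then extracts the homology-product condition on $M$.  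Your route is closer in spirit to Friedl--Kim's original argument and makes the degree count explicit; the paper's route trades the Milnor--Turaev torsion formalism for the elementary Lemma~\ref{lem:linchpin} and never needs to write $\tau$ as a product of determinants.

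Two places in your sketch need repair.  First, your alternating-product formula for $\tau$ only makes sense once each $\iota_- - t\,\iota_+$ on $H_k$ is a map between free $K[t^{\pm1}]$-modules of \emph{equal} rank, i.e.\ once every $H_k(\Xtil;\Ea)$ is $K[t^{\pm1}]$-torsion; this does not follow from the balanced Euler-characteristic identity alone and must be established first (it holds precisely when $H_1(\Xtil;\Ea)$ is finite-dimensional over $K$, which is the only case in which the bound is meaningful).  Second, your boundary discussion is off: torus components of $S$ have $\eulerminus = 0$ and so cannot ``inflate $\eulerminus{S}$'', and nugatory tori are by definition those bounding a region \emph{disjoint from} $\partial X$, not boundary-parallel ones.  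The hypothesis excludes them so that cutting $X$ along $S$ really presents $\Xtil$ as a clean $\Z$-stack of copies of $M$; it is not about cancelling factors of $\tau$ against contributions from $\partial X$.
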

\noindent
Here, a set of torus components of a taut surface $S$ are
\emph{nugatory} if they collectively bound a submanifold of $X$
disjoint from $\partial X$.  Theorem~\ref{thm:Eprodistaut} is explicit
and Theorem~\ref{thm:torsionconn} is implicit in Sections 3 and 4 of
\cite{FriedlKim2013} respectively; however, to make this paper more
self-contained, we include proofs of both results.

Let $S$ be a properly embedded compact surface in a \3-manifold $N$;
we do not assume either $S$ or $N$ is connected, and $N$ is allowed to
be noncompact and have boundary.  We say $S$ \emph{separates $N$ into
  $\Np$ and $\Nm$} if $N = \Np \cup_S \Nm$, the positive side of $S$
is contained in $\Np$, the negative side of $S$ is contained in $\Nm$,
and every component of $\Npm$ meets $S$.  The linchpin for
Theorems~\ref{thm:Eprodistaut} and \ref{thm:torsionconn} is the
following lemma, where all homology groups are with respect to some
system $E$ of local coefficients on $N$, and all maps on homology are
induced by inclusion:

\begin{lemma}\label{lem:linchpin}
  Suppose $S$ separates $N$ into $\Npm$.  If both
  $H_*(\Npm) \to H_*(N)$ are surjective then so are
  $H_*(S) \to H_*(\Npm)$ and $H_*(S) \to H_*(N)$. Moreover, if for
  some $k$ both $H_k(\Npm) \to H_k(N)$ are isomorphisms then so are
  $H_k(S) \to H_k(\Npm)$ and $H_k(S) \to H_k(N)$.
\end{lemma}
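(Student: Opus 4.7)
The plan is to run Mayer--Vietoris for the decomposition $N = N_+ \cup_S N_-$ with coefficients in $E$ (valid for local coefficients, as flagged in Section~\ref{sec:twisted_homology}):
\begin{equation*}
\cdots \to H_{k+1}(N;E) \xrightarrow{\partial} H_k(S;E) \xrightarrow{\alpha} H_k(N_+;E) \oplus H_k(N_-;E) \xrightarrow{\beta} H_k(N;E) \to \cdots
\end{equation*}
Here $\alpha$ is built from the two inclusions $i_\pm \maps S \hookrightarrow N_\pm$ and $\beta = j_+ - j_-$ for $j_\pm \maps N_\pm \hookrightarrow N$. Under the standing hypothesis that each $j_\pm$ is surjective on $H_*$, the map $\beta$ is surjective in every degree (each summand alone surjects onto $H_k(N;E)$), so all connecting maps $\partial$ vanish and the sequence splits into short exact sequences
\begin{equation*}
0 \to H_k(S;E) \xrightarrow{\alpha} H_k(N_+;E) \oplus H_k(N_-;E) \xrightarrow{\beta} H_k(N;E) \to 0.
\end{equation*}

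For the surjectivity half of the lemma, given $a \in H_k(N_+; E)$ I would use surjectivity of $j_-$ to choose $b \in H_k(N_-; E)$ with $j_-(b) = j_+(a)$; then $(a,b) \in \ker \beta = \im \alpha$, so there is $x \in H_k(S;E)$ with $i_+(x) = a$. The symmetric argument handles $i_-$, and the map $H_k(S;E) \to H_k(N;E)$ is then surjective by composing with $j_+$.

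For the isomorphism refinement the extra ingredient is injectivity of $\alpha$, which is automatic from the short exact sequence above. If $i_+(x) = 0$ then $\alpha(x) = (0, i_-(x))$ lies in $\ker \beta$, forcing $j_-(i_-(x)) = 0$; when $j_-$ is an isomorphism this gives $i_-(x) = 0$, whence $\alpha(x) = 0$ and injectivity of $\alpha$ yields $x = 0$. So $i_+$ is both injective and surjective; by symmetry so is $i_-$, and the induced $H_k(S;E) \to H_k(N;E)$ is then a composition of isomorphisms. The argument is essentially a diagram chase with no real obstacle—the only subtlety is checking that Mayer--Vietoris is legal for this codimension-one decomposition with local coefficients, which is handled by thickening to open neighborhoods of $N_\pm$ that deformation retract onto $N_\pm$ and whose intersection retracts onto $S$.
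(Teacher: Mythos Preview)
Your proof is correct and, for the surjectivity half, is essentially identical to the paper's: both set up the Mayer--Vietoris short exact sequences and lift $a\in H_k(N_+)$ by choosing a matching class in $H_k(N_-)$.

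For the isomorphism half you take a slightly different route. The paper invokes compactness of $S$ to conclude that all four groups $H_k(S)$, $H_k(N_\pm)$, $H_k(N)$ are finite-dimensional, then uses the short exact sequence to count dimensions: $\dim H_k(S)=\dim H_k(N_+)+\dim H_k(N_-)-\dim H_k(N)=\dim H_k(N)$, so the surjections $H_k(S)\to H_k(N_\pm)$ are between spaces of equal dimension and hence isomorphisms. Your argument instead proves injectivity of $i_+$ by a direct diagram chase using injectivity of $j_-$ and of $\alpha$. This avoids the compactness hypothesis on $S$ entirely, so it is marginally more general; the paper's dimension count is perhaps more transparent but relies on the ambient assumption (stated just before the lemma) that $S$ is compact.
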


\begin{proof}
  Since both $H_*(\Npm) \to H_*(N)$ are surjective, the Mayer-Vietoris
  sequence for $N = \Np \cup_S \Nm$ splits into short exact sequences
  \begin{equation}\label{eq:MV}
    0 \to H_k(S) \xrightarrow{i_+ \oplus i_-} H_k(\Np) \oplus H_k(\Nm)\xrightarrow{j_+-j_-} H_k(N) \to 0   
  \end{equation}
  To see that $H_k(S) \to H_k(\Np)$ is surjective, take
  $c_+ \in H_k(\Np)$ and choose $c_- \in H_k(\Nm)$ which maps to the
  same element in $H_k(N)$ as $c_+$; then $(c_+, c_-) \mapsto 0$ under
  $j_+-j_-$ and hence $c_+$ is the image of some element of
  $H_k(S)$ by exactness of (\ref{eq:MV}). Symmetrically, $H_k(S) \to
  H_k(\Nm)$ is also surjective, proving the first part of the lemma. 

  Suppose in addition that both $H_k(\Npm) \cong H_k(N)$.  Since $S$
  is compact and $H_k(S)$ surjects $H_k(\Npm)$ and $H_k(N)$, it
  follows that all four $K$-vector spaces are finite\hyp dimensional.
  Since $H_k(\Npm)\cong H_k(N)$, exactness of (\ref{eq:MV}) forces
  $H_k(S) \cong H_k(N)$, and hence the surjections
  $H_k(S) \to H_k(\Npm)$ must be isomorphisms as claimed.
\end{proof}

\noindent
We now show that a sutured manifold which is a homology product must be
taut.  

\begin{proof}[Proof of Theorem~\ref{thm:Eprodistaut}]
  We may assume that $M$ is connected.  All homology groups will have
  coefficients in $E$ unless otherwise indicated, and let $n$ be
  the dimension of the fiber of $E$.  We first reduce to the case
  where every component of $\Rpm$ has $\chi \leq 0$.  If a component
  of $\Rpm$ is a sphere, then $M$ must be $D^3$ by irreducibility with
  (say) $R_+ = \partial M$ and $R_- = \emptyset$.  Since $E$ must be
  trivial over $D^3$, we get that $\dim(H_0(M))= n$. However, $H_0(\Rm) = 0$
  contradicting that $M$ is an $E$-homology product.  If some
  component of $\Rpm$ is a disc, say $D \subset R_+$, then
  $\dim( H_0(D) ) = n$ and since a connected space will have $H_0$ of
  dimension at most $n$, we conclude that $\dim( H_0(M) ) = n$.
  However, then $E$ must be the trivial bundle, since nontrivial
  monodromy around some loop would reduce $\dim( H_0(M) )$ below $n$.
  It follows that $M$ is also a homology product with respect to
  $H_*( \ \cdot \ ; K)$, and hence $\Rpm$ are both connected and thus
  discs; by irreducibility, $M$ is $D^3$ with one suture and hence
  taut.  So from now on we assume that every component of $\Rpm$ has
  $\chi \leq 0$.

  Since we have excluded $M$ from being a solid torus with no sutures,
  all of the torus components of $\Rpm$ are incompressible.  Thus to
  prove that $M$ is taut it remains to show that $\Rpm$ realize the
  Thurston norm of their common class in $H_2(M, N(\gamma); \Z)$.
  Note this is automatic if $\Rp = \emptyset$ since the homology
  product condition implies $\chi(\Rm) = 0$, so from now on we assume
  both $\Rpm$ are nonempty.  Suppose $S$ is any other surface in that
  homology class.  Throwing away components of $S$ that bound
  submanifolds of $M$ that are disjoint from $\partial M$, we can
  assume that $S$ separates $M$ into $\Mpm$, where each $\Mpm$
  contains $\Rpm$ respectively. We next show that the theorem
  follows from:
  \begin{claim}\label{claim:hom}
    The maps $H_k(\Rpm) \to H_k(\Mpm)$ are isomorphisms for $k \neq 1$
    and injective for $k = 1$. The maps $H_k(S) \to H_k(\Mpm)$ are 
    isomorphisms for $k \neq 1$ and surjective for $k = 1$.
  \end{claim}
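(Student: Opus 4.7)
The strategy is to combine the factorization $R_\pm\hookrightarrow M_\pm\hookrightarrow M$ with Lemma~\ref{lem:linchpin}. Since by hypothesis the composition $R_\pm\hookrightarrow M$ induces an isomorphism on $H_*(\,\cdot\,;E)$, the first map is injective and the second surjective in every degree $k$; the injectivity already yields the ``injective for $k=1$'' assertion of the claim. The surjectivity $H_*(M_\pm;E)\twoheadrightarrow H_*(M;E)$ is precisely the hypothesis of Lemma~\ref{lem:linchpin} applied to $S$ separating $M$ into $M_\pm$, whose conclusion is that $H_k(S;E)\to H_k(M_\pm;E)$ and $H_k(S;E)\to H_k(M;E)$ are surjective in every degree (giving the ``surjective for $k=1$'' assertion) and that the Mayer--Vietoris sequence splits into short exact sequences
\[
0\to H_k(S;E)\to H_k(M_+;E)\oplus H_k(M_-;E)\to H_k(M;E)\to 0
\]
for every $k$.

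For the two ``iso for $k\neq 1$'' assertions, a short diagram chase against the above short exact sequence shows that both are equivalent to the single statement that $H_k(M_\pm;E)\to H_k(M;E)$ is an isomorphism for $k\neq 1$: once known, this iso composed with the injection $H_k(R_\pm;E)\hookrightarrow H_k(M_\pm;E)$ recovers the hypothesized iso $H_k(R_\pm;E)\cong H_k(M;E)$, forcing the injection itself to be an iso, after which the short exact sequence above collapses to force $H_k(S;E)\to H_k(M_\pm;E)$ to be an iso as well. The case $k\geq 3$ is vacuous since $M_\pm$ is a $3$-manifold with nonempty boundary. For $k=0$, the start-of-proof reduction forces every component of $M_\pm$ to meet $R_\pm$ (else a component of $S$ would bound a submanifold of $M$ disjoint from $\partial M$); this gives surjectivity of $H_0(R_\pm;E)\to H_0(M_\pm;E)$, which combined with the earlier injection is an iso, and this in turn forces the iso $H_0(M_\pm;E)\cong H_0(M;E)$.

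The crux is $k=2$. Using the long exact sequence of the pair $(M,M_+)$ together with the excision isomorphism $H_*(M,M_+;E)\cong H_*(M_-,S;E)$, the required iso at $H_2$ reduces to the vanishing $H_3(M_-,S;E)=0$; by the long exact sequence of the pair $(M_-,S)$ and the already-proven surjection $H_2(S;E)\twoheadrightarrow H_2(M_-;E)$, this in turn is equivalent to the \emph{injectivity} of $H_2(S;E)\to H_2(M_-;E)$ (and symmetrically for $M_+$). A class in $H_2$ of the surface $S$ is supported on its closed components, and a nontrivial kernel class should correspond to a union of closed components of $S$ bounding a submanifold of $M_-$ disjoint from $\partial M$, directly contradicting the start-of-proof reduction in which precisely such components of $S$ were discarded. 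Faithfully promoting this geometric cobounding argument to the twisted-coefficient setting is the main obstacle I anticipate.
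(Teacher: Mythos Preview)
Your argument tracks the paper's proof closely through the factorization $R_\pm\hookrightarrow M_\pm\hookrightarrow M$, the invocation of Lemma~\ref{lem:linchpin}, and the treatment of $k=0$. The gap you yourself flag at $k=2$ is real, and your proposed geometric cobounding argument is not the way to close it in the twisted setting. The fix is a one-line application of Poincar\'e duality.

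Concretely, $M_\pm$ is a compact oriented $3$-manifold whose boundary is partitioned into $R_\pm$ and $S$ (together with some annuli near the sutures, which do not affect relative $H^0$), so the twisted duality of (\ref{eq:poincare}) gives
\[
H_3(M_\pm, S; E) \;\cong\; H^0(M_\pm, R_\pm; E).
\]
The right side consists of global sections of $E$ over $M_\pm$ that vanish on $R_\pm$; since every component of $M_\pm$ meets $R_\pm$ (the very reduction you already invoke for $k=0$), this group is zero. That immediately yields the injectivity of $H_2(S;E)\to H_2(M_\pm;E)$ you were after, with no need to interpret twisted $H_2$-classes as bounding submanifolds. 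The paper uses exactly this duality, in fact twice: once as above to handle $H_2(S)\to H_2(M_\pm)$, and once with the signs swapped, combined with the long exact sequence of the triple $(M, M_\pm, R_\pm)$ and excision, to show $H_2(M_\pm, R_\pm;E)\cong H_3(M,M_\pm;E)\cong H_3(M_\mp,S;E)\cong H^0(M_\mp,R_\mp;E)=0$ and hence that $H_2(R_\pm)\to H_2(M_\pm)$ is an isomorphism directly.
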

  From the claim we get that $H_k(S) \cong H_k(\Rpm)$ for $k \neq
  1$ and $\dim H_1(S) \geq \dim H_1(\Rpm)$; hence
  \[
  n \cdot \chi(S) = \chi\big( H_*(S) \big) \leq \chi\big( H_*(\Rpm)
  \big) = n \cdot\chi(\Rpm)
  \]
  and so 
  \[
  \eulerminus{S} \geq -\chi(S) \geq -\chi(\Rpm) = \eulerminus{\Rpm}.
  \]
  Thus $\Rpm$ must realize the Thurston norm in its class, establishing
  the proposition modulo Claim~\ref{claim:hom}. 

  To prove the claim, first note that $S$, $\Rpm$, and $\Mpm$ are all
  homotopy equivalent to 2-complexes and so we need only consider
  $k \leq 2$.  Since $\Rpm \hookrightarrow M$ gives isomorphisms on
  $H_*$, we know $H_*(\Rpm) \to H_*(\Mpm)$ is injective and
  $H_*(\Mpm) \to H_*(M)$ is surjective.  Since every component of
  $\Mpm$ meets $\Rpm$, it follows that $H_0(\Rpm) \to H_0(\Mpm)$ is
  onto and hence an isomorphism; consequently, so is
  $H_0(\Mpm) \to H_0(M)$.  Since $H_*(M, \Rpm) = 0$, the long exact
  sequence of the triple $(M, \Mpm, \Rpm)$ gives that
  $H_2(\Mpm, \Rpm) \cong H_3(M, \Mpm)$; by excision and Poincar\'e
  duality, we have $H_3(M, \Mpm) \cong H_3(\Mmp, S) \cong H^0(\Mmp, \Rmp)$
  and the latter vanishes since each component of $\Mmp$ meets
  $\Rmp$.  Thus we have shown $H_2(\Mpm, \Rpm) = 0$, and hence
  $H_2(\Rpm) \to H_2(\Mpm)$ is an isomorphism.

  By Lemma~\ref{lem:linchpin}, we know that each
  $H_*(S) \to H_*(\Mpm)$ is surjective and moreover is an isomorphism
  for $* = 0$.  To see that $H_2(S) \to H_2(\Mpm)$ is an injection
  (and hence an isomorphism), just note that
  $H_3(\Mpm, S) \cong H^0(\Mpm, \Rpm) = 0$.  This proves the claim and
  thus the theorem.
\end{proof}

The last part of this section is devoted to proving the relationship
between the homology product condition and the Thurston norm bounds
coming from twisted torsion/Alexander polynomials.

\begin{proof}[Proof of Theorem~\ref{thm:torsionconn}]
  Let $n = \dim V$.  All homology groups will have coefficients in
  $\Ea$.  Let $\Xtil$ denote the infinite cyclic cover of $X$
  corresponding to $\phi$; it has the structure of a $\Z$'s worth of
  copies of $M$ stacked end to end so that the $\Rp$ on one block is
  glued to the $\Rm$ on the next. (Note that if $\phi$ is not
  primitive then $\Xtil$ is disconnected; you can reduce to the case
  of $\phi$ primitive to avoid this issue if you prefer.)  Let $\Stil$
  be a lift of $S$ to $\Xtil$ corresponding to the top of a preferred
  copy of $M$ in $\Xtil$, and note that $\Stil$ separates $\Xtil$ into
  $\Xtilp$ and $\Xtilm$ which consist of the blocks ``above'' and
  ``below'' $S$ respectively.

  Unwinding the definitions, the precise form of the lower bound given
  in Theorem 14 of \cite{FriedlVidussi2011} (which is Theorem 6.6 in
  the arXiv version) is equivalent to
  \begin{equation}\label{eq:bound}
  \norm{\phi} \geq \frac{1}{n} \left( \dim H_1( \Xtil )  - \dim
    H_0( \Xtil ) - \dim H_2( \Xtil) \right)
  \end{equation}
  where if $H_1( \Xtil)$ is infinite-dimensional the convention
  is to declare the right-hand side as 0.  (When $H_1( \Xtil)$ is
  finite-dimensional so is $H_2(\Xtil)$, see
  e.g.~\cite{FriedlVidussi2011}.)

  The only if direction is easy: if $M$ is an $\alpha$-homology
  product, the Mayer-Vietoris sequence and the fact that homology is
  compactly supported imply that $\Stil \hookrightarrow \Xtil$ gives an
  isomorphism on $H_*$; thus one has 
  \begin{equation} \label{eq:prodgood}
  \norm{\phi} = \eulerminus S \geq -\chi(S) = - \frac{1}{n} \chi\big(
  H_*(S) \big) = - \frac{1}{n} \chi\big( H_*(\Xtil) \big) = \mbox{RHS of (\ref{eq:bound})}
  \end{equation}
  where we have used that $H_3(\Xtil)$ must be 0 since $\Xtil$ is
  noncompact. 

  Conversely, suppose that (\ref{eq:bound}) is sharp.  We will show:
  \begin{claim}
    The maps $H_*(\Stil) \to H_*(\Xtilpm) \to H_*(\Xtil)$ are all isomorphisms. 
  \end{claim}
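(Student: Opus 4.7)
The plan is to establish surjectivity of $H_*(\Xtilpm; \Ea) \to H_*(\Xtil; \Ea)$ via a deck-transformation argument, then appeal to Lemma~\ref{lem:linchpin} and a Poincar\'e--Lefschetz dimension count to upgrade surjectivity to isomorphism. First I would run the computation in (\ref{eq:prodgood}) in reverse: because $S$ is taut without nugatory tori one has $\eulerminus{S} = -\chi(S)$, and so sharpness of (\ref{eq:bound}) yields the Euler characteristic equality $\chi(H_*(\Stil; \Ea)) = \chi(H_*(\Xtil; \Ea))$.

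Second, I would exploit the generating deck transformation $\sigma \maps \Xtil \to \Xtil$, which shifts blocks by $M_i \mapsto M_{i+1}$. Since (\ref{eq:bound}) is nontrivial, $H_*(\Xtil; \Ea)$ is finite-dimensional over $K$, on which $\sigma_*$ acts as an automorphism. For any class $[c] \in H_k(\Xtil; \Ea)$, a compact cycle representative is supported in $\bigcup_{|i|\le N} M_i$ for some $N$, and then $\sigma^{N+1}(c)$ is supported entirely in $\Xtilp$, so $\sigma_*^{N+1}[c]$ lies in the image $I$ of $H_k(\Xtilp; \Ea) \to H_k(\Xtil; \Ea)$. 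Because $\sigma(\Xtilp) \subset \Xtilp$, the image $I$ is $\sigma_*$-invariant, and any invariant subspace under an automorphism of a finite-dimensional vector space is in fact $\sigma_*$-stable, forcing $[c] \in I$. Hence $H_*(\Xtilp) \to H_*(\Xtil)$ is surjective; the parallel argument using $\sigma^{-1}$ gives surjectivity of $H_*(\Xtilm) \to H_*(\Xtil)$.

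Third, Lemma~\ref{lem:linchpin} applied to $\Xtil = \Xtilp \cup_\Stil \Xtilm$ would yield surjectivity of $H_*(\Stil) \to H_*(\Xtilpm)$ and $H_*(\Stil) \to H_*(\Xtil)$, and the Mayer--Vietoris long exact sequence collapses into short exact sequences
\[
0 \to H_k(\Stil) \to H_k(\Xtilp) \oplus H_k(\Xtilm) \to H_k(\Xtil) \to 0.
\]
The hard part will be upgrading these surjections to isomorphisms: the dimension identity coming from these short exact sequences together with the Euler characteristic equality from the first step gives only the alternating-sum identity, which is insufficient by itself. My strategy is to invoke Poincar\'e--Lefschetz duality on the pairs $(\Xtil, \Xtilpm)$, using the torus-boundary hypothesis on $X$ and the self-duality of $\Ea$, to obtain $H_k(\Xtil, \Xtilp; \Ea) \cong H_{3-k}(\Xtil, \Xtilm; \Ea)$. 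Combined with the identification of these relative groups (via the long exact sequences of the pairs and the surjectivities already established) with kernels of $H_*(\Stil) \to H_*(\Xtilmp)$, these constraints should pin down the individual dimensions in each degree; once dimensions match, the surjections become isomorphisms, and the second part of Lemma~\ref{lem:linchpin} then finishes the claim.
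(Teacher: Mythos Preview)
Your first two steps match the paper exactly: surjectivity of $H_*(\Xtilpm)\to H_*(\Xtil)$ via the deck action, then Lemma~\ref{lem:linchpin} to get surjectivity of $H_*(\Stil)\to H_*(\Xtilpm)$ and the split short exact sequences. The problem is your third step.

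First, Theorem~\ref{thm:torsionconn} is stated for an arbitrary $\alpha\maps\pi_1(X)\to\GL{}(V)$, so you cannot assume $\Ea$ is self-dual. Second, even granting self-duality, the Poincar\'e--Lefschetz isomorphism you want, $H_k(\Xtil,\Xtilp;\Ea)\cong H_{3-k}(\Xtil,\Xtilm;\Ea)$, does not follow from the duality in (\ref{eq:poincare}): after excision these become $H_k(\Xtilm,\Stil)$ and $H_{3-k}(\Xtilp,\Stil)$, and $\Xtilpm$ are \emph{noncompact} manifolds with noncompact boundary, so the compact duality statement does not apply. Finally, even if you had such a symmetry, you have not explained why it, together with the alternating-sum identity, forces the individual dimensions to agree; a priori one could have excess in degree~$1$ balanced by excess in degrees~$0$ and~$2$.

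The paper sidesteps all of this by handling the degrees other than~$1$ directly with elementary arguments. In degree~$3$: each component of $\Xtilmp$ is noncompact, so $H_3(\Xtilmp,\Stil)=0$; by excision this is $H_3(\Xtil,\Xtilpm)$, whence the surjection $H_2(\Stil)\to H_2(\Xtilpm)$ is injective and thus an isomorphism (and then Lemma~\ref{lem:linchpin} gives the isomorphism in degree~$2$). In degree~$0$: choose a compact $A\subset\Xtilpm$ large enough that $H_0(A)\to H_0(\Xtilpm)$ is onto and $H_0(A)\to H_0(\Xtil)$ is an isomorphism; this forces $H_0(\Xtilpm)\to H_0(\Xtil)$ to be an isomorphism, and Lemma~\ref{lem:linchpin} again upgrades $H_0(\Stil)\to H_0(\Xtilpm)$. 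With degrees~$0$, $2$, $3$ done, the Euler-characteristic equality $\chi\big(H_*(\Stil)\big)=\chi\big(H_*(\Xtil)\big)$ from sharpness now involves only degree~$1$, and the surjection $H_1(\Stil)\to H_1(\Xtil)$ is forced to be an isomorphism. So the missing idea in your proposal is simply to treat degrees~$0$ and~$2$ by hand rather than trying to package everything into a global duality.
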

  \noindent
  The claim implies the theorem as follows: if we take $\Xtilm'$ to be
  $\Xtilm$ shifted down by one, we have
  $\Xtil = \Xtilm' \cup_{\Stil'} M \cup_{\Stil} \Xtilp$.  Applying the
  Mayer-Vietoris sequence to this decomposition, the claim gives that
  $H_*(M) \to H_*(\Xtil)$ is an isomorphism.  Again by the claim, the
  inclusions of $\Stil = \Rp$ and $\Stil' = \Rm$ into $M$ induce
  isomorphisms on $H_*$, and so $M$ is an homology product.

  To prove the claim, begin by noting that $H_*(\Xtil)$ is
  finitely generated, and the $\Z$-action on $\Xtil$ can take any
  particular generating set to one which lies entirely in $\Xtilp$;
  hence $H_*(\Xtilp) \to H_*(\Xtil)$ is onto, as is
  $H_*(\Xtilm) \to H_*(\Xtil)$.  By Lemma~\ref{lem:linchpin}, we know
  $H_*(\Stil) \to H_*(\Xtilpm)$ is onto and an isomorphism when
  $* = 2$ since
  $H_3(\Xtil, \Xtilpm) \cong H_3(\Xtil_\mp, \Stil) \cong 0$ since
  (each component of) $\Xtil_\mp$ is noncompact.  For $* = 0$, we can
  build a compact subset $A$ of $\Xtilpm$ so that
  $H_0(A) \to H_0(\Xtilpm)$ is onto and $H_0(A) \to H_0(\Xtil)$ is an
  isomorphism; consequently, $H_0(\Xtilpm) \to H_0(\Xtil)$ is an
  isomorphism and hence so is $H_0(S) \to H_0(\Xtilpm)$ by
  Lemma~\ref{lem:linchpin}. Finally, from (\ref{eq:prodgood}), we see
  that $\chi(H_*(S)) = \chi(H_*(X))$ and hence the surjection
  $H_1(S) \to H_1(X)$ must be an isomorphism, proving the claim and
  thus the theorem. 
  \end{proof}

\section{Some homology products}\label{sec:products}

This section is devoted to the proof of Conjecture~\ref{conj:sutured}
in two nontrivial cases, both of which include many examples which
are not $\Q$-homology products:  

\theoremtorusguts

\theoremhandlebody

\subsection{Books of \emph{I}-bundles}\label{sec:books}

Recall that a \emph{book of $I$-bundles} is a \3-manifold built from
solid tori (the \emph{bindings}) and $I$-bundles over possibly
nonorientable compact surfaces (the \emph{pages}) glued in the
following way.  For a page $P$ which is an $I$-bundle over a surface
$S$, the \emph{vertical annuli} are the components of the preimage of
$\partial S$. One is allowed to glue such a vertical annulus to any
homotopically essential annulus in the boundary of the binding.  We do
not require that all vertical annuli are glued; those that are not are
called \emph{free}.  For a page $P$, the \emph{vertical boundary}
$\partial_v P$ is the union of all the vertical annuli; the
\emph{horizontal boundary} $\partial_h P$ is
$\partial P \setminus \partial_v P$. We say a sutured manifold is a
book of $I$-bundles if the underlying manifold has such a description
where the sutures are exactly the cores of the free vertical annuli.

\begin{lemma}\label{lem:Ibundle}
  If $M$ is a taut sutured manifold which is a book of $I$-bundles,
  then it has such a structure where all the pages are product
  $I$-bundles.  If the base surface of a page $P$ is not an annulus,
  then one component of the horizontal boundary is contained in $\Rp$
  and the other contained in $\Rm$.  The cores of the vertical annuli
  in the alternate description are homotopic to those in the original
  one.
\end{lemma}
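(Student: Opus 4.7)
The plan is to verify the three claims of the lemma in order.

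For the first claim, I would replace each twisted $I$-bundle page by a product $I$-bundle page together with some new bindings. Let $P$ be a page which is a twisted $I$-bundle over a compact nonorientable surface $S$. Assuming first that $S$ is not a Möbius band: by the classification of surfaces, $S = S_0 \cup_{c_1 \cup \cdots \cup c_k} (M_1 \sqcup \cdots \sqcup M_k)$, where $S_0$ is a compact orientable surface with non-empty boundary and each $M_j$ is a Möbius band attached to $S_0$ along a two-sided simple closed curve $c_j$. Since each $c_j$ is two-sided in $S$ it is orientation-preserving as a loop, so the $I$-bundle restricts to a trivial bundle over $c_j$, and its preimage $A_j \subset P$ is an annulus disjoint from $\partial_v P$. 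Cutting $P$ along the $A_j$ produces the product $I$-bundle $S_0 \times I$ together with $k$ twisted $I$-bundles over Möbius bands, each of which is a solid torus. I would then redescribe the book-of-$I$-bundles structure by declaring $S_0 \times I$ a new product page and each Möbius $I$-bundle a new binding, with the $A_j$ now serving as gluing annuli; each $A_j$ is essential on its binding since its core wraps twice around the core of the corresponding solid torus. The free vertical annuli of $P$ remain as free vertical annuli of $S_0 \times I$, so the sutures of $M$ are unchanged. In the remaining edge case $S = $ Möbius band, the page $P$ is itself a solid torus and I would simply reclassify it as a binding.

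For the second claim, let $P = S \times I$ be a product page in the alternate description with $S$ not an annulus. If $P$ has a free vertical annulus $V$, the suture on $V$ is its core, which divides $V$ into two sub-annuli lying in $R_+$ and $R_-$ respectively; these sub-annuli meet $S \times \{0\}$ and $S \times \{1\}$ without any intervening suture, forcing the two components of $\partial_h P$ into opposite $R_\pm$. Otherwise every vertical annulus of $P$ is glued, and I would argue by contradiction supposing that both $S \times \{0\}$ and $S \times \{1\}$ lie in $R_+$. The key identity is $\partial [P] = \partial_h P + \partial_v P$ as oriented 2-chains, so $R_+' = R_+ - \partial [P]$ represents the same class as $R_+$ in $H_2\bigl(M, N(\gamma); \Z\bigr)$ and has $\partial R_+' = \partial R_+ \subset \gamma$. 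Geometrically, $R_+'$ is obtained by deleting $\partial_h P$ from $R_+$ and attaching $\partial_v P$ with reversed orientation in the interior of $M$, giving a properly embedded surface. Inclusion--exclusion along the rim circles $\partial S \times \{0,1\}$ yields $\chi(R_+') = \chi(R_+) - 2\chi(S)$. Since $S$ is orientable with non-empty boundary, is not an annulus, and is not a disk (by the tautness of $R_+$, which excludes positive-Euler-characteristic components of $R_\pm$), one has $\chi(S) < 0$, so $\chi(R_+') > \chi(R_+)$. After standard simplifications --- removing sphere components of $R_+'$, which bound balls by the irreducibility of $M$, and compressing any compressible components --- one obtains a properly embedded surface $\Sigma$ in the same relative class with $\eulerminus{\Sigma} < \eulerminus{R_+}$, contradicting the tautness of $R_+$.

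For the third claim, each vertical annulus in the alternate description is either an original vertical annulus (whose core is literally the same curve in $M$) or one of the new annuli $A_j$ from a Möbius-band cut (whose core is homotopic in $M$ to the two-sided curve $c_j \subset S \subset P$ sitting in the original page). In each case the core is a canonical curve already drawn inside the original $I$-bundle structure. The main technical obstacle is the final step of the second claim in the no-free-vertical-annulus case: one needs to confirm that the gain $-2\chi(S) > 0$ in Euler characteristic genuinely yields a strict decrease in $\eulerminus{\,\cdot\,}$ after the standard simplifications, rather than being absorbed by positive-Euler-characteristic components of $R_+'$. The irreducibility of $M$ and the incompressibility of $R_+$, both consequences of tautness, provide the tools needed to carry the simplifications through cleanly.
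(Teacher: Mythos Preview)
Your argument for the second claim is essentially the paper's, but your approach to the first claim is genuinely different, and this difference breaks the third claim. The paper does not decompose an arbitrary twisted page; instead it first uses tautness (via exactly your surgery $(R_+\setminus\partial_h P)\cup\partial_v P$) to show that a twisted page in a \emph{taut} sutured book of $I$-bundles must have base surface a M\"obius band, and only then replaces such a page by an annulus$\times I$ page together with a new binding. The point is that in the paper's replacement the two vertical annuli of the new annulus page have homotopic cores, and one of them is literally the original $\partial_v P$; hence every vertical-annulus core in the alternate description is homotopic to one in the original description, which is exactly what the third claim asserts (and what Theorem~\ref{thm:torusguts} needs to transfer the trace hypothesis to the alternate structure). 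In your construction, by contrast, the new gluing annuli $A_j$ have cores $c_j$ lying in the interior of the base surface $S$, and for a page over, say, a once-punctured Klein bottle these $c_j$ are not homotopic in $P$ (or in $M$ in general) to any component of $\partial S$. So your ``canonical curve already drawn inside the original structure'' does not establish the claim as stated.

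There is also a minor slip in your M\"obius-band edge case: simply reclassifying the twisted $I$-bundle $P$ as a binding is not compatible with the book-of-$I$-bundles definition, since $\partial_v P$ may be free (carrying a suture, which cannot lie on a binding) or already glued to a binding (and bindings are not glued directly to one another). The fix is to do what your general construction already does when $S$ is a M\"obius band: take $S_0$ to be an annulus, obtaining an annulus$\times I$ page plus a new binding---which is precisely the paper's move. Once you adopt this, you see that the tautness step is not an optional shortcut but the mechanism that guarantees no \emph{other} new cores are introduced.
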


\begin{proof}
  Suppose some page $P$ is a twisted $I$-bundle over a connected
  nonorientable surface $S$.  Then the horizontal boundary
  $\partial_h P$ is connected and hence contained entirely in one of
  $\Rpm$, say $\Rp$.  Then
  $(\Rp \setminus \partial_h P) \cup \partial_v P$ is a surface
  homologous to $\Rp$ with Euler characteristic
  $\chi(\Rp) - 2 \chi(S)$.  Since $\Rp$ is taut, we must have that $S$
  is a M\"obius band.  The pair $(P, \partial_v P)$ is
  homeomorphic to a solid torus $B$ with an annulus that represents
  twice a generator of $\pi_1(B)$.  Thus we can replace $P$ with a
  product bundle over the annulus to which we have attached a new
  component of the binding.

  If a page $P$ is a product $I$-bundle over an orientable surface $S$,
  the same argument shows that if $\partial_h P$ is contained in just
  one of $\Rp$ and $\Rm$ then the base surface must be an annulus. 
  This proves the lemma.
\end{proof}

\noindent
The proof of Theorem~\ref{thm:torusguts} rests on the following simple
observation.
\begin{lemma}\label{lem:circle}
  Suppose $\alpha: \pi_1(S^1) \to \SL{2}{\C}$ is such that $\tr\big(
  \alpha(\gamma)\big) \neq 2$ where $\gamma$ is a generator of
  $\pi_1(S^1)$.  Then $H_*(S^1; \Ea) = 0$. 
\end{lemma}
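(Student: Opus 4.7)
The plan is to compute $H_*(S^1; E_\alpha)$ directly from the standard CW structure on $S^1$ with a single $0$-cell and a single $1$-cell. Lift this to the universal cover $\R$, on which $\pi_1(S^1) = \Z$ acts by deck transformations; the corresponding twisted cellular chain complex takes the form
\[
0 \longrightarrow V \xrightarrow{\ A - I\ } V \longrightarrow 0,
\]
where $V = \C^2$ and $A = \alpha(\gamma) \in \SL{2}{\C}$. Here $H_1(S^1; E_\alpha) = \ker(A - I)$ and $H_0(S^1; E_\alpha) = \coker(A - I)$.

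Next, I would observe that for $A \in \SL{2}{\C}$ with eigenvalues $\lambda_1, \lambda_2$ satisfying $\lambda_1 \lambda_2 = 1$, one has
\[
\det(A - I) = (\lambda_1 - 1)(\lambda_2 - 1) = \lambda_1 \lambda_2 - (\lambda_1 + \lambda_2) + 1 = 2 - \tr(A).
\]
Since by hypothesis $\tr(A) \neq 2$, the matrix $A - I$ is invertible, so both $\ker(A - I)$ and $\coker(A - I)$ vanish. This gives $H_k(S^1; E_\alpha) = 0$ for all $k$, as desired.

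There is essentially no obstacle here; the only thing to be careful about is matching the geometric definition of twisted homology used in the paper (following Hatcher) with the algebraic cellular model on the universal cover, but this is standard and already invoked implicitly in Section~\ref{sec:twisted_homology}. I would write the argument in a few lines, using the cellular computation as the main step and the determinant identity $\det(A - I) = 2 - \tr(A)$ as the key algebraic input.
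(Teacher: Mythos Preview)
Your proof is correct and takes essentially the same approach as the paper: both compute the twisted homology of $S^1$ directly from its one-cell CW structure, which amounts to showing that $A - I$ has trivial kernel and cokernel on $\C^2$. The paper identifies $H_0$ with the co-invariants and argues by cases (diagonalizable vs.\ parabolic) that $1$ is not an eigenvalue of $\alpha(\gamma)$, then invokes the Euler-characteristic relation $2\chi(S^1) = \chi\big(H_*(S^1; E_\alpha)\big) = 0$ to get $H_1 = 0$; your determinant identity $\det(A - I) = 2 - \tr(A)$ dispatches both degrees at once and avoids the case split, which is a bit cleaner.
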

\begin{proof}
  As with any space, $H_0(S^1; \Ea)$ is the set of co-invariants of
  $\alpha$, that is, the quotient of $\C^2$ by
  $\setdef{\alpha(g) v - v}{g \in \pi_1(S^1), v \in \C^2}$.  If
  $\alpha(\gamma)$ is diagonalizable, then this is 0 since neither
  eigenvalue of $\alpha(\gamma)$ can be $1$ by the trace condition;
  alternatively, if $\alpha(\gamma)$ is parabolic then by the trace
  assumption it is conjugate to $\mysmallmatrix{-1}{1}{0}{-1}$ and
  again the co-invariants vanish.  Since
  $0 = 2 \chi(S^1) = \chi\big( H_*(S^1; \Ea) \big)$ it follows that
  $H_1(S^1; \Ea) = 0$ as well, proving the lemma.
\end{proof}

\noindent
We next establish the first main result of this section.

\begin{proof}[Proof of Theorem \ref{thm:torusguts}] 
  As usual, all homology will have coefficients in $\Ea$.  Consider
  the decomposition of $M$ into $B \cup_A P$, where $B$ is the
  binding, $P$ is the union of all the pages, and $A$ is the union of
  attaching annuli.  By Lemma~\ref{lem:Ibundle}, we can assume that
  $P = (S \times [-1, 1]) \cup Y$ where
  $S \times \{\pm 1\} \subset \Rpm$ and $Y$ is a union of
  $(\mbox{annulus}) \times I$.  By our hypothesis on $\alpha$,
  Lemma~\ref{lem:circle} implies that $H_*(A) = 0$ and $H_*(Y) = 0$.
  Moreover, $H_*(B) = 0$ since the generator of
  $\pi_1( \mbox{component of $B$} )$ has a power which has
  $\tr(\alpha) \neq 2$ and hence must have $\tr(\alpha) \neq 2$ as
  well.  Set $B' = B \cup Y$ and let $A' \subset A$ be the interface
  between $B'$ and $S \times [-1, 1]$.  Applying Mayer-Vietoris to the
  decomposition $M = B' \cup_{A'} (S \times [-1, 1])$ immediately
  gives that $H_*(S \times [-1, 1] ) \to H_*(M)$ is an isomorphism.
  The same reasoning shows that $H_*(S \times \pm 1) \to H_*(\Rpm)$
  are isomorphisms.  Combining, we get that $H_*(\Rpm) \to H_*(M)$
  are isomorphisms, and so $M$ is an $\alpha$-homology product as
  claimed.
\end{proof}

\subsection{Acylindrical sutured handlebodies}

We turn now to the proof of Theorem~\ref{thm:handlebody}.  The following is an immediate consequence of the results in \cite{MenalFerrerPorti2012}.

\begin{theorem} 
  \label{thm:hypprod}
  Suppose $M$ is a sutured manifold where each component of $\Rpm$ is
  a torus.  If the interior of $M$ has a complete hyperbolic metric of
  finite volume, then there exists a lift
  $\alpha \maps \pi_1(M) \to \SL{2}{\C}$ of its holonomy
  representation so that $H_*(M; \Ea) = 0$ and $H_*(\Rpm; \Ea) = 0$.
  In particular, $M$ is an $\alpha$-homology product.
\end{theorem}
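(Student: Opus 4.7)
The plan is to directly invoke the Menal-Ferrer--Porti vanishing theorem for twisted cohomology of cusped hyperbolic 3-manifolds and then check separately that the coefficient system is acyclic on each peripheral torus. Since both $H_*(M;\Ea)$ and $H_*(\Rpm;\Ea)$ will vanish, the inclusion-induced maps are trivially isomorphisms and the homology product condition is immediate.

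First, because $\partial M$ consists of tori and $M$ is homotopy equivalent to a $2$-complex, the holonomy $\rho\maps\pi_1(M)\to\PSL{2}{\C}$ admits lifts to $\SL{2}{\C}$, and any two such lifts differ by an element of $H^1(M;\Z/2)$. On each peripheral $\Z^2$ the image under $\rho$ is parabolic, so for any lift $\alpha$ and any primitive peripheral element $\gamma$ one has $\tr\alpha(\gamma)=\pm2$; the sign on each cusp can be toggled independently by twisting $\alpha$ by an appropriate class in $H^1(M;\Z/2)$. I choose $\alpha$ so that every nontrivial peripheral element has trace $-2$, equivalently so that $\alpha$ restricted to each peripheral $\Z^2$ is a parabolic representation whose image meets the identity matrix only at the identity of $\pi_1(T)$.

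Second, for this choice of lift the main vanishing theorem of \cite{MenalFerrerPorti2012} gives $H^*(M;\Ea)=0$, and by the self-duality of the standard $\SL{2}{\C}$-representation recorded in Section~\ref{sec:twisted_homology} this is equivalent to $H_*(M;\Ea)=0$. Third, for each torus component $T$ of $\Rpm$, after conjugation $\alpha|_{\pi_1(T)}$ takes values in the group of matrices of the form $-\mysmallmatrix{1}{a}{0}{1}$ with $a\in\C$, none of which has a nonzero fixed vector in $\C^2$. Hence the invariants and coinvariants of this action both vanish, giving $H^0(T;\Ea)=H_0(T;\Ea)=0$; combined with Poincar\'e duality on the closed surface $T$, self-duality, and $\chi(T)=0$, this forces $H_*(T;\Ea)=0$ as claimed.

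The only serious input is the vanishing theorem of Menal-Ferrer and Porti; once that is in hand, the peripheral computation and the deduction of the homology product are routine. The one subtlety to watch is ensuring that a single global lift can be chosen so that the trace $-2$ condition holds simultaneously at \emph{every} cusp, but this follows from the fact that the sign of the peripheral trace on each cusp is an independent $\Z/2$-invariant and the map $H^1(M;\Z/2)\to\bigoplus_{T}H^1(T;\Z/2)$ is surjective because $\partial M\hookrightarrow M$ induces a surjection on $H^1(\cdot;\Z/2)$ via the half-lives-half-dies principle for 3-manifolds with boundary.
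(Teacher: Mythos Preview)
Your overall approach is the same as the paper's: invoke Menal-Ferrer--Porti for the vanishing of $H^*(M;\Ea)$, handle the boundary tori separately, and pass to homology via self-duality.  However, your justification for the existence of the lift $\alpha$ contains two genuine errors.

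First, the condition you impose on $\alpha$ is impossible.  On a peripheral torus the assignment $\gamma\mapsto \tfrac12\tr\alpha(\gamma)\in\{\pm1\}$ is a group homomorphism $\pi_1(T)\cong\Z^2\to\{\pm1\}$, so its kernel has index at most~$2$; there are always nontrivial peripheral elements with trace $+2$.  Concretely, if $\alpha(\mu)$ and $\alpha(\lambda)$ both have the form $-\mysmallmatrix{1}{*}{0}{1}$ then $\alpha(\mu\lambda)=\mysmallmatrix{1}{*}{0}{1}$ has trace $+2$, so the image does \emph{not} consist solely of matrices of the form $-\mysmallmatrix{1}{a}{0}{1}$.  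The correct hypothesis, and the one your torus computation actually uses, is that on each cusp \emph{some} nontrivial element has trace $-2$: that single element already has no nonzero fixed vector in $\C^2$, which forces $H^0(T;\Ea)=0$ and hence $H_*(T;\Ea)=0$ by the duality and Euler-characteristic argument you give.

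Second, and more seriously, your appeal to half-lives-half-dies proves the opposite of what you want: it says the image of $H^1(M;\Z/2)\to H^1(\partial M;\Z/2)$ is a Lagrangian subspace of exactly half the dimension, hence is \emph{never} all of $H^1(\partial M;\Z/2)$ when $\partial M\neq\emptyset$.  So the peripheral signs cannot be toggled independently, and your argument for producing the desired lift breaks down.  The paper does not attempt a direct argument here; it cites \cite[Lemma~3.9]{MenalFerrerPorti2012} for the existence of a lift with a trace~$-2$ element on every cusp, then \cite[Corollary~3.6]{MenalFerrerPorti2012} for $H^*(\partial M;\Ea)=0$ and \cite[Theorem~0.1]{MenalFerrerPorti2012} for $H^*(M;\Ea)=0$.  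Once you replace your impossible lift condition with the correct one and invoke Lemma~3.9 for its existence, the remainder of your argument goes through and matches the paper's.
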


\begin{proof}
  By Lemma 3.9 of \cite{MenalFerrerPorti2012}, there is a lift
  $\alpha$ of the holonomy representation so that for each component
  of $\partial M$ there is some curve $c$ with
  $\tr \big(\alpha(c)\big) = -2$. Corollary 3.6 of
  \cite{MenalFerrerPorti2012} now implies that
  $H^*(\partial M; \Ea) = 0$, and Theorem 0.1 of
  \cite{MenalFerrerPorti2012} then gives that $H^*(M; \Ea) = 0$ as well.
  Since $\Ea$ is self-dual, it follows that
  $H_*(\partial M; \Ea) = H_*(M; \Ea) = 0$; since $H_*(\partial M;
  \Ea) = H_*(\Rm; \Ea) \oplus H_*(\Rp; \Ea)$ we are done.  
\end{proof}

\begin{lemma}\label{lem:2handle}
  Let $M$ be a sutured manifold and $N$ be the sutured manifold
  resulting from attaching a \2-handle to $M$ along a component of the
  suture set $\gamma$.  Let $E$ be a system of local coefficients on
  $N$.  If $N$ is an $E$-homology product then $M$ is an $E
  |_M$-homology product.
\end{lemma}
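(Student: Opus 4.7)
My plan is to write $N = M \cup_A H$, where $H \cong D^2 \times I$ is the attached \2-handle and $A = H \cap M$ is the annular neighborhood of the suture component to which $H$ is glued. The two disk faces $D_\pm = D^2 \times \{\pm 1\}$ of $H$ which remain in $\partial N$ are attached to $R_\pm^M$ along the two boundary circles of $A$. Thus the sutured structure on $N$ satisfies $R_\pm^N = R_\pm^M \cup_{\partial_\pm A} D_\pm$ and $\gamma^N = \gamma^M \setminus A$, where $\partial_\pm A$ denotes the boundary circle of $A$ meeting $R_\pm^M$.

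With this in hand, I would apply the relative Mayer-Vietoris long exact sequence for the pair $(N, R_+^N)$ decomposed as $(M, R_+^M) \cup (H, D_+)$ with intersection $(A, \partial_+ A)$:
\begin{equation*}
  \cdots \to H_k(A, \partial_+ A; E) \to H_k(M, R_+^M; E|_M) \oplus H_k(H, D_+; E) \to H_k(N, R_+^N; E) \to \cdots
\end{equation*}
Both $(H, D_+)$ and $(A, \partial_+ A)$ deformation retract onto their second terms, so their relative twisted homology vanishes for any local coefficient system. By hypothesis $H_*(N, R_+^N; E) = 0$, and the sequence collapses to give $H_k(M, R_+^M; E|_M) = 0$ for all $k$. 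The argument for $R_-$ using $(H, D_-)$ and $\partial_- A$ is identical, so $M$ is an $E|_M$-homology product.

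The one point requiring attention is confirming that the new sutured structure on $N$ really does assign $D_+$ to $R_+^N$ and $D_-$ to $R_-^N$, rather than absorbing both disks into one side; this follows from the orientation conventions for sutured manifolds, since the two boundary circles of $A$ respectively abut $R_+^M$ and $R_-^M$ in $\partial M$. Beyond this piece of bookkeeping, no real obstacle arises: the local system $E$ automatically pulls back consistently across the Mayer-Vietoris decomposition because $E$ is defined globally on $N$, and the rest of the argument is a formal consequence of two very standard deformation retracts.
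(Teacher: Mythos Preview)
Your proof is correct and follows essentially the same route as the paper: both arguments rest on a Mayer--Vietoris decomposition of $N$ as $M \cup (D^2 \times I)$ and of $R_+^N$ as $R_+^M \cup D^2$, exploiting that the attached pieces deformation retract onto their intersections. The only cosmetic difference is that you invoke the relative Mayer--Vietoris sequence for the pair $(N, R_+^N)$ directly, whereas the paper writes out the two absolute Mayer--Vietoris sequences, maps one to the other, and applies the five lemma; these are equivalent packagings of the same computation.
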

\noindent
This is a natural result since if $N$ is taut then so is $M$, though
the converse is not always true.  

\begin{proof}
  Throughout, all homology is with coefficients in $E$.  Let $\Rbarp
  \subset \partial N$ be the extension of $\Rp$ to the new sutured
  manifold $N$.  Note that $\Rbarp  = \Rp \cup D^2$ and $N = M \cup (D^2
  \times I)$.  Consider the associated Mayer-Vietoris sequences and
  natural maps:
  \[
  \begin{CD}
    @>>> H_k( S^1)  @>>> H_k(\Rp) \oplus H_k(D^2) @>>> H_k(\Rbarp) @>>> \\ 
   @.      @VVV      @VVV      @VVV   @.  \\ 
    @>>> H_k( S^1 \times I)  @>>> H_k(M) \oplus H_k(D^2 \times I) @>>> H_k(N) @>>> 
  \end{CD}
  \]
  The leftmost vertical arrow is an isomorphism since it comes from
  a homotopy equivalence.  The rightmost vertical arrow is an
  isomorphism by hypothesis.  By the five lemma, the middle arrow must
  be an isomorphism; since it is the direct sum of the maps
  $H_k(\Rp) \to H_k(M)$ and $H_k(D^2) \to H_k(D^2 \times I)$ we
  conclude that $H_k(\Rp) \to H_k(M)$ is an isomorphism.  The
  symmetric argument proves that $H_k(\Rm) \to H_k(M)$ is an
  isomorphism for every $k$ and so $M$ is indeed an $E$-homology
  product.
\end{proof}

\begin{theorem} \label{thm:torihomprod}
  Suppose that $M$ is a sutured manifold so that each component of
  $\Rpm$ is a (possibly) punctured torus.  If adding \2-handles to $M$
  along all the sutures results in a hyperbolic manifold, then there
  exists $\alpha \maps \pi_1(M) \to \SL{2}{\C}$ so that $M$ is an
  $\alpha$-homology product.

\end{theorem}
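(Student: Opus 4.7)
The plan is to reduce to Theorem~\ref{thm:hypprod} via 2-handle attachment, then use Lemma~\ref{lem:2handle} to transport the homology product structure back down to $M$. Let $N$ denote the sutured manifold obtained from $M$ by attaching a 2-handle along each component of the suture set $\gamma$. Since each component of $R_\pm$ is a (possibly) punctured torus and 2-handle attachment caps each boundary circle of $R_\pm$ with a disk, $N$ has no sutures and the new $\bar R_\pm$ are disjoint unions of closed tori (including any torus components of the original $R_\pm$, which are untouched). The hypothesis that $N$ is hyperbolic is exactly what is needed to apply Theorem~\ref{thm:hypprod}, which produces a lift of the holonomy $\alpha \maps \pi_1(N) \to \SL{2}{\C}$ for which $N$ is an $\alpha$-homology product.

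Next I would pull $\alpha$ back along the inclusion $M \hookrightarrow N$ and show the composition $\alpha_M \maps \pi_1(M) \to \pi_1(N) \xrightarrow{\alpha} \SL{2}{\C}$ makes $M$ into a homology product. To do this, list the components of $\gamma$ as $\gamma_1, \dots, \gamma_k$ and interpolate a chain of sutured manifolds $M = M_0, M_1, \dots, M_k = N$ where each $M_{i+1}$ is obtained from $M_i$ by attaching a single 2-handle. The restrictions of $\alpha$ along the inclusions $M_i \hookrightarrow N$ give a compatible family of local coefficient systems $E_i$, with $E_{i+1}|_{M_i} = E_i$. Since $M_k = N$ is an $E_k$-homology product, applying Lemma~\ref{lem:2handle} backwards down the chain (peeling off one 2-handle at a time) yields successively that $M_{k-1}, \dots, M_1, M_0 = M$ is a homology product with respect to the appropriate restricted coefficient system. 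In particular $M$ is an $\alpha_M$-homology product, as required.

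The only thing left to verify is that each intermediate $M_i$ is a legitimate sutured manifold so that Lemma~\ref{lem:2handle} may be applied at each step; one must check that no degenerate $R_\pm$-components arise along the way. This is automatic here, because capping a boundary circle of an $n$-times punctured torus (for $n \geq 1$) produces an $(n{-}1)$-times punctured torus, which is never a sphere or a disk and in particular does not force $M_i$ out of the balanced/taut conventions used in Lemma~\ref{lem:2handle}. Thus the essential content of the proof is packaged in Theorem~\ref{thm:hypprod} (and hence in \cite{MenalFerrerPorti2012}); Lemma~\ref{lem:2handle} then formally propagates the conclusion from $N$ back to $M$.
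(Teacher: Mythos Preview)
Your proof is correct and follows essentially the same approach as the paper: form $N$ by attaching all the 2-handles, apply Theorem~\ref{thm:hypprod} to get the representation $\alpha$ on $\pi_1(N)$, then invoke Lemma~\ref{lem:2handle} inductively to descend back to $M$. The extra care you take in checking the intermediate $M_i$ is unnecessary, since Lemma~\ref{lem:2handle} as stated imposes no balanced or taut hypothesis on the sutured manifolds involved, but it does no harm.
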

\begin{proof}
  Let $N$ be the result of adding \2-handles to the sutures of $M$.  Let
  $\alpha \maps \pi_1(N) \to \SL{2}{\C}$ be the lift of the holonomy
  representation of the hyperbolic structure on $N$ given by
  Theorem~\ref{thm:hypprod}.  Applying
  Lemma~\ref{lem:2handle} inductively shows that $M$ is a homology
  product with respect to the induced representation $\pi_1(M) \to
  \SL{2}{\C}$ as needed. 
\end{proof}

\noindent
We can now prove the other main result of this section.  

\begin{proof}[Proof of Theorem~\ref{thm:handlebody}]
  By Theorem~\ref{thm:torihomprod} it suffices to prove that the
  result $M_\gamma$ of attaching a \2-handle to $M$ along $\gamma$ is
  hyperbolic.  Being a handlebody, $M$ is irreducible and atoroidal.
  Since $\partial M$ is compressible and $M \setminus \gamma$ is
  incompressible, Theorems A, 1, and 2 of \cite{EudaveMunoz1994} together
  imply that $M_\gamma$ is irreducible, acylindrical, atoroidal, and
  has incompressible boundary (when applying Theorems 1 and 2, note
  that $\gamma$ is separating, which is one of the special cases
  mentioned in the final paragraph of the statements of these
  results).  Thus $\interior(M_\gamma)$ has a complete hyperbolic
  metric of finite-volume as needed.
\end{proof}

\begin{remark}
  The representation $\alpha$ given in the proof of
  Theorem~\ref{thm:handlebody} may seem a bit unnatural since it is
  reducible on $\pi_1(\Rpm)$.  However, it can be perturbed to $\beta$
  for which $M$ is still a homology product and where $\beta$ is
  parabolic free on $\pi_1(M)$ and hence faithful.  The point is just
  that the set of all such $\beta$ is the complement of a countable
  union of \emph{proper} Zariski closed subsets in the character
  variety $X(M) \cong \C^3$, and hence is dense in $X(M)$. 
  Specifically, as discussed in Section~\ref{sec:example}, the locus
  where $M$ is not a homology product is Zariski closed, as of course
  is the set where a fixed nontrivial $\gamma \in \pi_1(M)$ is parabolic.
\end{remark}

\section{An example}\label{sec:example}

Suppose $M$ is a balanced sutured manifold which is homeomorphic to a
genus 2 handlebody.  Assuming that each of $\Rpm$ is connected, then
either $\Rpm$ are both tori with one boundary component or both pairs
of pants.  In this section, we compute $H^1(M, R_+; \Ea)$ in a
specific example as $\alpha$ varies over the $\SL{2}{\C}$ character
variety of $\pi_1(M)$, and so characterize the $\alpha$ for which $M$
is an $\alpha$-homology product.  This leads to the proof of
Theorem~\ref{thm:example} which was discussed in the introduction.

\subsection{Basic setup}

Both $\pi_1(R_+)$ and $\pi_1(M)$ are free groups of rank two, say
generated by $\Fxy$ and $\Fab$ respectively; let $i_* \maps \pi_1(M)
\to \pi_1(R_+)$ be the map induced by the inclusion $i \maps R_+
\hookrightarrow M$.  For $w \in
\Fxy$ we denote its Fox derivatives in $\ZFxy$ by
$\partial_x w$ and $\partial_y w$, where 
\[
\partial_x x = 1, \quad \partial_x x^{-1} = - x^{-1}, \quad \partial_x
y^{\pm 1} = 0, \quad \mbox{and} \quad \partial_x(w_1 \cdot w_2)
= \partial_x w_1 + w_1 \cdot \partial_x w_2
\]
Now fix a representation $\alpha \maps \pi_1(M) \to \GL{}{(V)}$ where
$\dim(V) = 2$, and extend to a ring homomorphism $\alpha \maps \mathbb{Z}[\pi_1(M)]\to \End(V)$.  
\begin{proposition}\label{prop:fox}
  The sutured manifold $M$ is an $\alpha$-homology product precisely
  when the $4 \times 4$ matrix
  \[
  \renewcommand*{\arraystretch}{1.5}
  \left( \begin{array}{cc} \alpha\big( \partial_x i_*(a) \big) & \alpha\big(\partial_y i_*(a)\big)\\
       \alpha\big(\partial_x i_*(b)\big) &  \alpha\big(\partial_y i_*(b)\big)
       \end{array}\right)
  \]
  has nonzero determinant.
\end{proposition}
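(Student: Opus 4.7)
The plan is to compute $H^*(M, R_+; \Ea)$ directly from a Fox calculus description of the relative cellular cochain complex, and then to apply Proposition~\ref{prop:homprodcond}. Because both $\pi_1(M)$ and $\pi_1(R_+)$ are free of rank two, I can realize the pair up to homotopy by a small CW model: give $R_+$ one $0$-cell, two $1$-cells labelled $x, y$ generating $\pi_1(R_+)$, and one $2$-cell for the surface; then extend to a CW complex of the homotopy type of $M$ by adjoining two further $1$-cells labelled $a, b$ together with two relation $2$-cells whose attaching words $a \cdot i_*(a)^{-1}$ and $b \cdot i_*(b)^{-1}$ impose the identifications forced by $i_*$. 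Then $R_+$ is a subcomplex of this model, and the relative CW pair has exactly two relative $1$-cells and two relative $2$-cells.

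The cellular chain complex of the universal cover of the pair is then concentrated in degrees $1$ and $2$. A standard Fox calculus computation on the two relation words, together with the simplification that the relations hold in $\pi_1(M)$ (so that each factor $a \cdot i_*(a)^{-1}$ can be replaced by the identity when evaluating its image), shows that the boundary $\partial_2 \colon C^{\mathrm{rel}}_2 \to C^{\mathrm{rel}}_1$ is a $2 \times 2$ matrix over $\Z[\pi_1(M)]$ whose entries are $\pm \partial_x i_*(a), \pm \partial_y i_*(a), \pm \partial_x i_*(b), \pm \partial_y i_*(b)$. Dualizing via $\mathrm{Hom}_{\Z[\pi_1(M)]}(-, V)$ with $\alpha$ giving the $\pi_1(M)$-module structure on $V$ produces a relative cochain complex of the form $V^2 \xrightarrow{\delta} V^2$, where $\delta$ is, up to transpose and sign, exactly the $4 \times 4$ block matrix in the statement.

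Consequently $H^1(M, R_+; \Ea) = H^2(M, R_+; \Ea) = 0$ if and only if $\delta$ is invertible, if and only if the displayed $4 \times 4$ matrix has nonzero determinant. Since in the setting of interest $\alpha$ takes values in $\SL{2}{\C}$, the local system $\Ea$ is homologically self-dual, so Proposition~\ref{prop:homprodcond} promotes this single vanishing to the full assertion that $M$ is an $\alpha$-homology product.

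The main obstacle I foresee is bookkeeping rather than conceptual depth: the paper writes $i_*$ as going from $\pi_1(M)$ to $\pi_1(R_+)$ rather than in the more usual inclusion-induced direction, so some care is required to track which generators are being replaced by which relations and to pin down the signs and transpositions in the Fox-derivative matrix so that the abstract relative boundary and the $4 \times 4$ matrix of the proposition match exactly. Once the CW pair is fixed, the identification of the relative boundary with the Fox Jacobian is the standard dictionary between cellular chain complexes of universal covers and group-theoretic data.
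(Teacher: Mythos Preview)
Your overall strategy matches the paper's: build a small CW model for the pair $(M, R_+)$, read off the relative (co)chain complex via Fox calculus, and invoke Proposition~\ref{prop:homprodcond}. But the specific CW model you describe is set up backwards in a way that actually breaks the computation, not just the bookkeeping.

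First a minor point: $R_+$ is a surface \emph{with boundary} (a once-punctured torus or pair of pants), hence homotopy equivalent to a wedge of two circles; there is no ``$2$-cell for the surface''. More seriously, in your model the subcomplex for $R_+$ carries the edges $e_x, e_y$ and the \emph{relative} $1$-cells are $e_a, e_b$, while your relation words $a \cdot i_*(a)^{-1}$ express the relative generators $a, b$ as words $i_*(a), i_*(b)$ in the subcomplex generators $x, y$. This makes $a, b$ redundant in $\pi_1$, so the inclusion of your subcomplex into your big complex induces the \emph{identity} on $\pi_1$, not $i_*$. Concretely, the relative $C_1$ is spanned by $e_a, e_b$, so the relative $\partial_2$ records the Fox derivatives $\partial_a, \partial_b$ of your relators, not $\partial_x, \partial_y$; since $i_*(a)$ is a word in $x, y$ alone you get $\partial_a\big(a \cdot i_*(a)^{-1}\big) = 1$, $\partial_b\big(a \cdot i_*(a)^{-1}\big) = 0$, and the relative $\partial_2$ is the identity matrix. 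Your model would give $H^*(M, R_+; \Ea) = 0$ for every $\alpha$, which is false.

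The paper's model swaps the roles (and this also resolves the direction of $i_*$: despite the sentence in the setup, Figure~\ref{fig:sutured} shows $x, y$ generate $\pi_1(M)$, so $a, b$ generate $\pi_1(R_+)$ and $i_* \maps \pi_1(R_+) \to \pi_1(M)$). The subcomplex $B = e_a \cup e_b$ models $R_+$, the relative $1$-cells are $e_x, e_y$ (the generators of $\pi_1(M)$), and the relations $i_*(a)\cdot a^{-1}$ express the \emph{subcomplex} generators in terms of the \emph{relative} ones. Now $\pi_1(W) = \langle x, y\rangle = \pi_1(M)$, the inclusion $B \hookrightarrow W$ induces $a \mapsto i_*(a)$ as it should, and the relative $\partial_2$ picks out the $e_x, e_y$ coefficients, i.e.\ $\partial_x i_*(a)$, $\partial_y i_*(a)$, etc.---exactly the matrix in the statement. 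Once you swap which pair of edges lives in the subcomplex, your argument is the paper's.
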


\begin{proof}
  Consider the 2-complex $W$ with one vertex $v$, four edges $e_x,
  e_y, e_a, e_b$, and two faces $r_a, r_b$ with attaching maps
  specified by the words $i_*(a) \cdot a^{-1}$ and $i_*(b) \cdot b^{-1}$.  For
  the subcomplex $B = e_a \cup e_b$, there is a map $j \maps
  (W, B) \to (M, R_+)$ which induces homotopy equivalences $W \to M$
  and $B \to R_+$ corresponding to the natural maps on fundamental
  groups ($[e_x] \mapsto x$, $[e_a] \mapsto a$, etc.).
  By the long-exact sequence of the pair and the five lemma,
  it follows that $j_*$ induces an isomorphism $ H^*(M, R_+; \Ea) \to H^*(W, B; E_{\alpha
    \circ j_*})$.

  By Proposition~\ref{prop:homprodcond}, to show $M$ is an
  $\alpha$-homology product, it remains to show
  $H^1(W, B; E_{\alpha \circ j_*}) = 0$. As a left module over
  $\Lambda = \ZFxy$, the chain complex of the universal cover $\Wtil$
  of $W$ has the form:
  \[
  C_*\big(\Wtil; \Z\big): \quad 
  0 \to \Lambda r_a \oplus \Lambda r_b \overset{\partial_2}\longrightarrow 
  \Lambda e_x \oplus \Lambda e_y \oplus \Lambda e_a \oplus \Lambda e_b
  \overset{\partial_1}\longrightarrow  \Lambda v \to 0
  \]
  Since $\Lambda$ is noncommutative, it is most natural to write the
  matrices $[\partial_i]$ for the left-module maps $\partial_i$ so
  that they act on row vectors to their left, that is
  $\partial_i(v) = v \cdot [\partial_i]$. In this form, we have the
  following, where we have denoted $i_*(a)$ and $i_*(b)$ in
  $\Fxy$ by just $a$ and $b$:
  \[
 \renewcommand*{\arraystretch}{1.1}
  [\partial_1] = \left( \begin{array}{c} 
      x - 1 \\ y - 1 \\ a - 1 \\ b - 1 
    \end{array}\right) \qquad
  [\partial_2] = \left( \begin{array}{cccc}
      \partial_x a  & \partial_y a  &  -1 & 0\\ 
      \partial_x b  & \partial_y b  & 0 & -1 \\
    \end{array} \right)
  \]
  Applying the functor $\Hom(\, \cdot \,, V_\alpha)$ to get
  $C^*(W; E_{\alpha \circ j_*})$ has the effect of replacing each copy
  of $\Lambda$ with $V$, where the matrices of the coboundary maps $d^i$
  are the result of applying $\alpha \maps \Lambda \to \End(V)$ entrywise
  to the $[\partial_i]$; here the matrices $[d^i]$ act on column
  vectors to their right.  Restricting to the subcomplex of cochains
  vanishing on $B$ gives:
  \[
   C^*(W, B; E_{\alpha \circ j_*}): \quad 0 \leftarrow V^2 \overset{d^1}
   \longleftarrow V^2 \leftarrow 0 \leftarrow 0
  \]
  where $d^1$ is precisely the matrix in the statement of the
  proposition; the result follows.  
\end{proof}

\begin{figure}[tb]
  \floatbox[{\capbeside\thisfloatsetup{capbesideposition={right,center},capbesidewidth=9cm}}]{figure}[\FBwidth]
  {\caption{The sutured manifold $M$ sketched at left is $D^3$ with
      open neighborhoods of the two dark arcs removed, where $\Rp$ and
      $\Rm$ are the pairs of pants indicated. The manifold $M$ is
      homeomorphic to a handlebody, with $\pi_1(M)$ freely generated
      by the loops $x$ and $y$; the element $u$ in $\pi_1(M)$ is
      $y x y x^{-1}y^{-1}$.  These claims can be checked by a
      straightforward calculation starting with a Reidemeister-like
      presentation for $\pi_1(M)$. }
  \label{fig:sutured}}%
{\begin{tikzoverlay}[width=4.6cm]{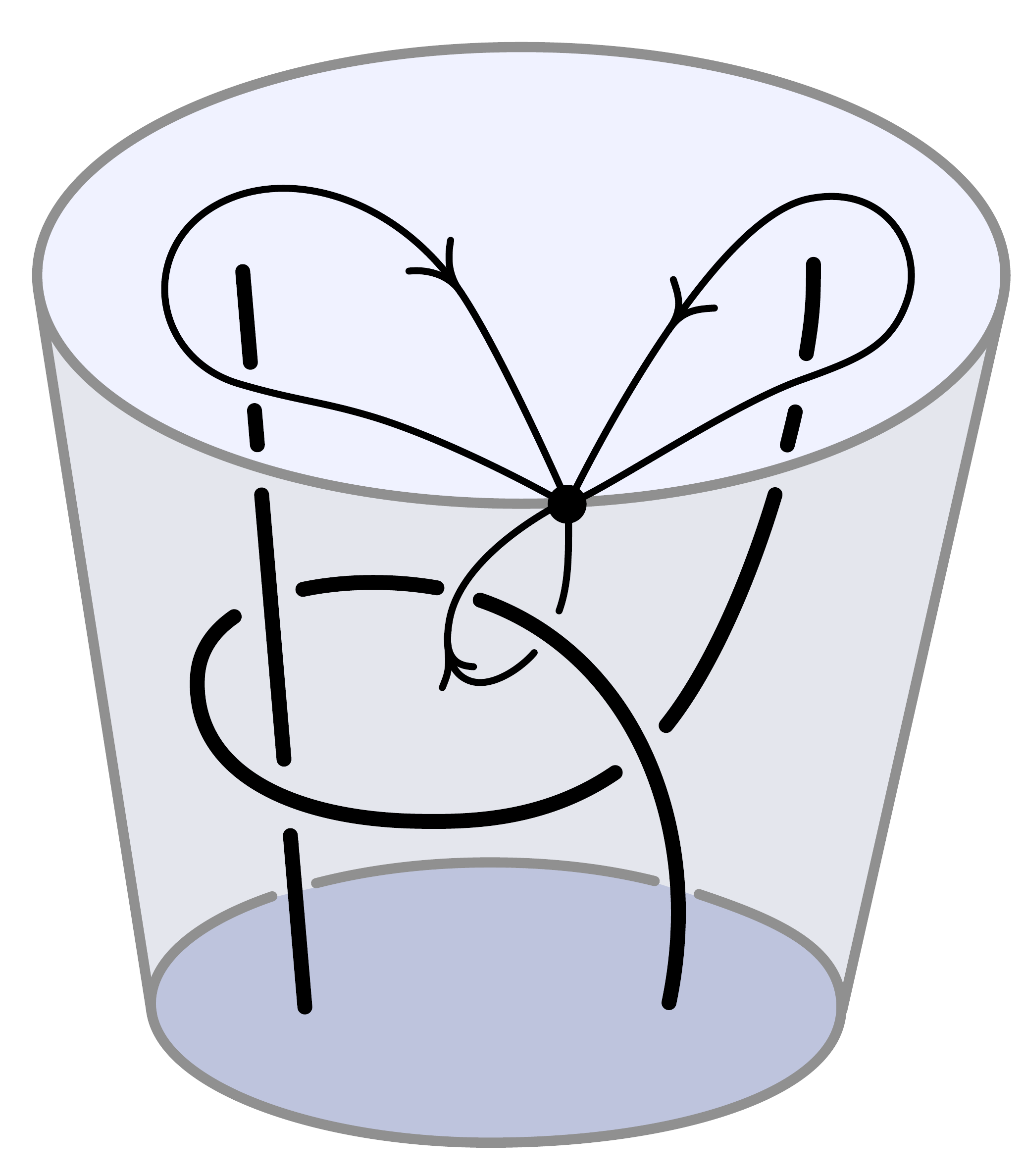}[font=\small]
    \node[] at (53.9,99.7) {$\Rp$};
    \node[] at (50.4,13.4) {$\Rm$};
    \node[below left] at (43.9,88.1) {$x$};
    \node[below left] at (43.9,50.8) {$y$};
    \node[below] at (68.8,84.5) {$u$};
\end{tikzoverlay}}
\end{figure}

\subsection{Pants example}  Let $M$ be the sutured manifold shown in
Figure~\ref{fig:sutured}, where the free group $\pi_1(\Rp)$ has generators
\[
\pi_1(\Rp) = \pair{x, \,   y x y x^{-1}y^{-1}} 
\]
Let $X(M)$ be the $\SL{2}{\C}$ character variety of $\pi_1(M) =
\Fxy$. Now $X(M) \cong \C^3$ with coordinates $\{\xbar, \ybar, \zbar\}$
corresponding to the trace functions of $\{x, y, xy\}$.  Despite the
fact that $M$ is a product with respect to ordinary $\Z$ homology, we
will show:
\begin{theorem}\label{thm:locus}
  The locus $L$ of $[\alpha] \in X(M)$ where $M$ is not an
  $\alpha$-homology product is a (complex) 2-dimensional plane, 
  namely $\big\{ \xbar  + \ybar - \zbar = 3 \big\}$.
\end{theorem}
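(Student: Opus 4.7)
The plan is to apply Proposition~\ref{prop:fox} directly to the presentation read off Figure~\ref{fig:sutured}: we have $\pi_1(R_+) = \pair{a, b}$ with $i_*(a) = x$ and $i_*(b) = u := y x y x^{-1} y^{-1}$ in $\pi_1(M) = \Fxy$. Since $i_*(a) = x$, we immediately get $\partial_x i_*(a) = 1$ and $\partial_y i_*(a) = 0$, so the first block row of the $4\times 4$ Fox derivative matrix is $(I, 0)$. The matrix is therefore block upper-triangular, and its determinant collapses to the single $2\times 2$ determinant $\det\bigl(\alpha(\partial_y u)\bigr)$. In particular $L$ is automatically a hypersurface, cut out by one equation; the content of the theorem is that this hypersurface is actually a hyperplane.

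Next I would carry out the routine Fox calculus to find $\partial_y u$; iterating the product and inverse rules gives $\partial_y u = 1 + yx - u$. Setting $X = \alpha(x)$ and $Y = \alpha(y)$, and noting that $u = (yx)\,y\,(yx)^{-1}$, the task reduces to analyzing $\det\bigl(I + YX - (YX)\,Y\,(YX)^{-1}\bigr)$ as $\alpha$ varies. The key manipulation is right-multiplication by $YX$: because $\det(YX) = 1$, this clears the $(YX)^{-1}$ and yields the much cleaner identity
\[
\det\bigl(I + YX - \alpha(u)\bigr) = \det\bigl(I + YX - Y\bigr),
\]
a polynomial expression in the matrix entries of $X$ and $Y$.

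Finally I would convert this determinant into the trace coordinates on $X(M) \cong \C^3$. Writing $N = Y(X - I)$, the standard $2\times 2$ identity $\det(I + N) = 1 + \tr(N) + \det(N)$, combined with $\tr(YX) = \zbar$, $\tr(Y) = \ybar$, and $\det(X - I) = 2 - \xbar$ (using $X \in \SL{2}{\C}$), shows that our determinant equals the affine-linear polynomial $3 - \xbar - \ybar + \zbar$. Its vanishing locus is exactly the plane $\xbar + \ybar - \zbar = 3$, which is precisely the claim of the theorem.

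The only genuine obstacle is the conjugation $(YX)\,Y\,(YX)^{-1}$ inside the determinant: on its face this is a rational, not polynomial, expression in the matrix entries, so the fact that $L$ turns out to be defined by a single linear equation (rather than a higher-degree or reducible variety) is really a consequence of the right-multiplication trick above that replaces the conjugate by the bare $Y$. Once that simplification is made, everything else is a mechanical unwinding of the Fox differentials and the $2\times 2$ determinant identity.
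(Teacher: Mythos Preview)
Your argument is correct and follows the same initial reduction as the paper: apply Proposition~\ref{prop:fox} to the generators $i_*(a)=x$ and $i_*(b)=u=yxyx^{-1}y^{-1}$, observe that the first block row is $(I,0)$, and conclude that the $4\times 4$ determinant equals $\det\alpha(\partial_y u)=\det\alpha(1+yx-u)$.  Two cosmetic slips: the block matrix is \emph{lower}-triangular, not upper; and what you call ``right-multiplication by $YX$'' is really conjugation by $(YX)^{-1}$ (equivalently, right-multiply by $YX$ and then factor $YX$ out on the left).  Neither affects the validity of the identity $\det(I+YX-\alpha(u))=\det(I+YX-Y)$, which is correct.

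Where you diverge from the paper is in evaluating this last determinant.  The paper conjugates at the group-ring level to reach $w=1+xy-xyx^{-1}$, then substitutes an explicit parametrization of irreducible representations and eliminates an auxiliary variable to obtain $\xbar+\ybar-\zbar-3$.  Your route is coordinate-free: writing $N=Y(X-I)$ and using the $2\times 2$ identity $\det(I+N)=1+\tr N+\det N$ together with $\det(X-I)=2-\xbar$ gives $3-\xbar-\ybar+\zbar$ directly.  This is shorter and also slightly more general: because the answer is manifestly a polynomial in the trace coordinates $\xbar,\ybar,\zbar$, it applies uniformly to \emph{all} representations with a given character, so the paper's separate remark handling reducible characters becomes unnecessary.
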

\begin{remark}
  Unlike for irreducible representations, characters
  $[\alpha] \in X(M)$ consisting of \emph{reducible} representations
  may contain nonconjugate representations.  For such classes, there
  is thus ambiguity in which local system $E$ to associate with
  $[\alpha]$.  However, it turns out that whether $M$ is an
  $E$-homology product is independent of this choice.
  Similar to \cite[Lemma 7.1]{DunfieldFriedlJackson2012}, the point is
  that reducible representations with the same character share the
  same diagonal part and one uses this with Proposition~\ref{prop:fox}
  to verify the claim; since our focus is on irreducible
  representations, we leave the details to the interested reader.
\end{remark}

\begin{proof}
By Proposition~\ref{prop:fox}, we are interested in when 
\[
\det \left( \begin{array}{cc} \alpha\big( 1 \big) & 0\\
     \alpha\big(y - y x y x^{-1}\big) &  \alpha\big(1 + yx -  y x y x^{-1}y^{-1}\big)
       \end{array}\right) = 0
\]
or equivalently when $\det\big(\alpha(w)\big) = 0$ for $w = 1 + xy
- x y x^{-1} \in \ZFxy$.  Any
irreducible $\alpha$ can be conjugated so that 
\[
\alpha(x) = \left(\begin{array}{cc} 0 & 1 \\  -1 &
    \xbar \end{array}\right) \mtext{and}
\alpha(y) = \left(\begin{array}{cc} \ybar & -u\\  u^{-1} &
    0\end{array}\right)
\mtext{where $u + u^{-1} =\zbar$.}
\]
Applying this $\alpha$ to $w$ and eliminating variables yields that
$\det\big(\alpha(w)\big) = 0$ if and only if $\xbar  + \ybar - \zbar -
3=0$; thus $L$ is as claimed.  
\end{proof}

One representation in $L$ is $(\xbar, \ybar, \zbar) = (4,  4, 5)$ which
can be realized by 
\[
\alpha(x) = \left(\begin{array}{cc} 1 & 1 \\  2 & 3
    \end{array}\right) \mtext{and}
\alpha(y) = \left(\begin{array}{cc} 1 & -2 \\  -1 &
    3\end{array}\right).
\]
An easy calculation shows that the axes of these hyperbolic elements
cross in $\H^2$; since $\alpha \left(xyx^{-1}y^{-1}\right)$ is also
hyperbolic with negative trace, it follows that $\alpha\big(\Fxy\big)$
is a Fuchsian Schottky group \cite{Purzitsky1972}.  In particular,
  $\alpha$ is discrete, faithful, and purely hyperbolic.  This proves:

\theoremsuturedexample

\begin{remark}
  Representations that cover the same homomorphism
  $\pi_1(M) \to \PSL{2}{\C}$ need not give rise to isomorphic
  cohomology.  For example, the Schottky representation above covers
  the same $\PSL{2}{\C}$ representation as $\beta$ where
  $(\xbar, \ybar, \zbar) = (-4, 4, -5)$, which is not in $L$, and hence
  $M$ is a $\beta$-homology product.  In fact, in this example, every
  irreducible representation to $\PSL{2}{\C}$ has \emph{some} lift to
  $\SL{2}{\C}$ for which $M$ is a homology product.
\end{remark}

\begin{remark}
  For each $N \geq 2$, the group $\SL{2}{\C}$ has a unique irreducible
  $N$\hyp dimensional complex representation, which we denote
  $\iota_N \maps \SL{2}{\C} \to \SL{N}{\C}$.  Let $L_N$ be the locus
  of $[\alpha]$ in $X(M)$ where $M$ is not an $\iota_N \circ \alpha$
  homology product.  A straightforward calculation with Gr\"obner
  bases finds:
  \begin{align*}
  L_3 =& \big\{2\xbar\ybar\zbar- \xbar^2 - \ybar^2 - 3\zbar^2 + 3 = 0 \big\} \\ 
  L_4 =& \big\{3\xbar^2\ybar^2\zbar - 3\xbar^2\ybar\zbar^2 - 3\xbar\ybar^2\zbar^2 + \xbar^4 - 2\xbar^3\ybar
  - 2\xbar\ybar^3 + \ybar^4 + 2\xbar^3\zbar \\ 
  & \quad \quad + 3\xbar^2\ybar\zbar +
  3\xbar\ybar^2\zbar + 2\ybar^3\zbar -
  3\xbar\ybar\zbar^2 + 2\xbar\zbar^3 + 2\ybar\zbar^3 + \zbar^4 -
  3\xbar^3 - 3\ybar^3 \\
  & \quad \quad  + 3\zbar^3- 3\xbar^2
  + 6\xbar\ybar - 3\ybar^2 - 6\xbar\zbar - 6\ybar\zbar - 3\zbar^2 +
  6\xbar + 6\ybar - 6\zbar + 9 = 0\}
  \end{align*}
  The intersection $L_2 \cap L_3 \cap L_4$ is zero-dimensional, as one
  would expect from the intersection of three (complex) surfaces in
  $\C^3$.   Computing out a bit farther, we found that $\bigcap_{N =
    2}^{5} L_N = \bigcap_{N =
    2}^{10} L_N$ contains a single point $(\xbar, \ybar, \zbar) = (2,2,1)$ outside
  the reducible representations; in particular, there are no purely hyperbolic
  representations in this intersection.  
\end{remark}

\section{Libroid Seifert surfaces}\label{sec:libroid}

In this last section, we study libroid knots, a notion generalizing
fibered knots and fibroid surfaces which is defined in
Section~\ref{sec:libdef} below.  We will show that this is a large class
of knots for which Conjecture~\ref{conj:knots} holds:

\begin{theorem}\label{thm:hyplib}
  All special arborescent knots, except the $(2,n)$--torus knots, are
  hyperbolic libroid knots.  Moreover, there are infinitely many
  hyperbolic libroid knots whose ordinary Alexander polynomial is
  trivial.
\end{theorem}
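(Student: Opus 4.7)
The theorem splits into two claims, which I would handle separately.

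\textbf{Plan for the first claim.} Special arborescent knots are by definition those whose Seifert surface is built from iterated Murasugi sums (plumbings) of oriented twisted bands, each of which is an annulus with an integer number of half-twists giving it an oriented surface structure. The plan is to induct on the number of bands. For the base case, a single twisted band is the Seifert surface of a $(2,n)$-torus knot (or link), whose complement is either a solid torus or a union of two solid tori glued along an annulus; this has the structure of a book of $I$-bundles in the trivial way, so this knot is libroid. For the inductive step, every special arborescent knot is obtained from a smaller one by a single Murasugi sum with a twisted band, and Lemma~\ref{lem:murasugi} then shows the result is libroid, provided we take $\Sigma$ to include both the inductive Seifert surface and the plumbing annulus of the new band. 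For hyperbolicity, I would appeal to the classification of arborescent (in particular, Montesinos) knots by Bonahon--Siebenmann (see also Oertel): a special arborescent knot fails to be hyperbolic precisely when the plumbing reduces to a linear chain of same-sign bands, which is exactly when the knot is a $(2,n)$-torus knot. Since $(2,n)$-torus knots are the only excluded cases, we are done.

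\textbf{Plan for the second claim.} The goal is to exhibit infinitely many hyperbolic libroid knots with $\Delta_K(t) = 1$. The natural strategy is to produce a building block that contributes trivially to the Alexander polynomial when plumbed in, and then show that iterated or twisted versions stay hyperbolic and libroid. One concrete approach: begin with a hyperbolic libroid Seifert surface $F_0$ and Murasugi-sum a pair of cancelling plumbings (for instance, a positive and negative Hopf band arranged so that their combined contribution to the Seifert matrix is unimodular, yielding trivial Alexander polynomial). Because Murasugi sum preserves the libroid property (Lemma~\ref{lem:murasugi}), each such knot is libroid. To realize infinitely many distinct knots, vary the number of half-twists inserted along one of the plumbing bands; this yields an infinite family whose Alexander polynomials can be computed to all be trivial.

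\textbf{Expected obstacles.} The first claim is mostly organizational: identifying the correct collection $\Sigma$ of Seifert surfaces at each stage of the induction so that Lemma~\ref{lem:murasugi} applies cleanly, and separately invoking the standard hyperbolicity results for arborescent knots. The real work is in the second claim: one must (i) identify a plumbing pattern producing trivial Alexander polynomial, ideally using the multiplicativity formula for $\Delta$ under Murasugi sum (which gives $\Delta_{F_1 * F_2} = \Delta_{F_1} \Delta_{F_2}$ up to units for fibered sums, but is subtler in general and must be controlled carefully from the Seifert matrix), and (ii) certify hyperbolicity for the entire infinite family, say by exhibiting a hyperbolic structure on one member, producing a parameter (such as the twist number in a fixed band) that varies the knot but preserves the sutured decomposition, and arguing via a standard hyperbolic Dehn surgery or geometrization argument that all but finitely many members of the family are hyperbolic. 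Handling (ii) uniformly is the main obstacle.
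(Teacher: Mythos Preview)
Your plan for the first claim matches the paper's: induct on the number of plumbed bands via Lemma~\ref{lem:murasugi}, and cite Bonahon--Siebenmann (or Futer--Gu\'eritaud) for hyperbolicity outside the $(2,n)$ torus knots. That part is fine.

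The second claim has a genuine gap. Your concrete suggestion, plumbing positive and negative Hopf bands so that their contributions ``cancel'' in the Seifert matrix, cannot produce a nontrivial knot with $\Delta_K = 1$: any iterated Murasugi sum of Hopf bands is fibered (Gabai), and a fibered knot has monic Alexander polynomial of degree $2g$, so $\Delta_K$ is trivial only for the unknot. More generally, staying inside special arborescent knots and hoping the Alexander polynomial becomes trivial is the wrong direction; you have not identified any family where this actually happens, and your remarks about multiplicativity of $\Delta$ under Murasugi sum don't lead anywhere concrete.

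The paper takes the opposite tack. It starts from a family already known to have $\Delta_K = 1$, the Kinoshita--Terasaka knots $\KT{2}{n}$, and then does the work of showing they are libroid. The point is that $\KT{2}{n}$ is obtained by plumbing an even-twisted band onto the (non-fibered) minimal genus Seifert surface $S$ of the $(3,-2,2,-3)$ pretzel link; by Lemma~\ref{lem:murasugi} it suffices to show that $S^3 \setminus S$ is a book of $I$-bundles, which is verified by an explicit product-disk decomposition reducing the sutured complement to a solid torus with four longitudinal sutures. Hyperbolicity and triviality of $\Delta$ then come for free from the existing literature on arborescent and Kinoshita--Terasaka knots. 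The moral: to get $\Delta_K = 1$ you need a non-fibered libroid building block, and finding one requires a direct sutured-manifold computation rather than formal manipulation of plumbings.
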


\theoremsharpknots 

\subsection{Library sutured manifolds}\label{sec:libdef}

We call a taut sutured manifold $(M, \Rpm, \gamma)$ a \emph{library}
if there is a taut surface
$(\Sigma,\partial \Sigma)\subset (M, N(\gamma))$ such that
$[\Sigma]=n[\Rp]\in H_2(M,N(\gamma); \Z)$ for some $n \geq 0$, and the
sutured manifold $M\setminus \Sigma$ is a book of $I$-bundles in the
sense of Section~\ref{sec:books}.  Note that $M \setminus \Sigma$ has
at least $n+1$ connected components, and thus is a collection of books
of $I$-bundles, that is, a ``library''.  We say that a taut surface
$S \subset X^3$ is a \emph{libroid surface} if $X \setminus S$ is a
library sutured manifold. This generalizes the notion of a
\emph{fibroid surface} \cite{CullerShalen1994}, and in fact the
surface $S \cup \Sigma$ is a fibroid surface.  We say that a knot in
$S^3$ is libroid if it has a minimal genus Seifert surface which is
libroid.  Definitions in hand, we now deduce
Theorem~\ref{thm:sharpknots} from Theorem~\ref{thm:torusguts}.

\begin{proof}[Proof of Theorem~\ref{thm:sharpknots}]
  Let $K$ be a libroid knot with $X$ its exterior, and let
  $\alpha \maps \pi_1(X) \to \SL{2}{\C}$ be a lift of the holonomy
  representation of the hyperbolic structure on $X$.  Let $S$ be a
  minimal genus Seifert surface for $K$ which is libroid.  By
  Theorem~\ref{thm:torsionconn}, we just need to show that the sutured
  manifold $M = X \setminus S$ is an $\alpha$-homology product.  This
  is immediate if $M$ is a product, so we will assume from now on that
  $X$ is not fibered.  Let $\{ \Sigma_i \}$ be disjoint minimal genus
  Seifert surfaces cutting $M$ up into sutured manifolds that are each
  a book of $I$-bundles; for notational convenience, set
  $\Sigma_0 = S$.  It is enough to show that each such book $B$ is an
  $\alpha$-homology product, since they are stacked one atop another
  to form $M$.  To apply Theorem~\ref{thm:torusguts}, we need to check that
  no core $\gamma$ of a gluing annulus has
  $\tr\big(\alpha(\gamma)\big) = 2$.  Assume $\gamma$ is such a
  core, so in particular $\alpha(\gamma)$ is parabolic.

  First note that $\gamma$ is isotopic to an essential curve in some
  $\Sigma_i$.  Since $\Sigma_i$ is minimal genus and not a fiber, by
  Fenley \cite{Fenley1998} it is a quasi-Fuchsian surface in $X$ and
  in particular the only embedded curve in $\Sigma_i$ whose image
  under $\alpha$ is parabolic is $\partial \Sigma_i$, which is the
  homological longitude $\lambda \in \pi_1(\partial X)$. But by
  \cite[Corollary 2.6]{Calegari2006} or \cite[Corollary
  3.11]{MenalFerrerPorti2012}, one always has
  $\tr\big(\alpha(\lambda)\big) = -2$, which contradicts that
  $\alpha(\gamma)$ has trace $+2$.  So we can apply
  Theorem~\ref{thm:torusguts} as desired, proving the theorem.
\end{proof}

\subsection{A plethora of libroid knots}

We now turn to showing that there are many hyperbolic libroid knots.
A key tool for this will be the notion of Murasugi sum, which we
quickly review.  Consider two oriented surfaces with boundary $S_1$
and $S_2$ in $S^3$, and let $L_i = \partial S_i \subset S^3$ be the
associated links. Suppose that $S_1$ and $S_2$ intersect so that there
is a sphere $S^2 \subset S^3$ with $S^3=B_1\cup_{S^2} B_2$, so that
$S_i \subset B_i$; see Figure \ref{fig:murasugi}(a, b), where the
interface between $B_1$ and $B_2$ is a horizontal plane separating
$S_1$ and $S_2$, which have been pulled apart slightly for clarity.
Moreover, assume that $S_1\cap S_2 = P \subset S^2$ is a $2k$-sided
polygon, where the edges of $\partial P$ are cyclically numbered so
that the odd edges lie in $L_1$, and the even edges lie in
$L_2$. Also, assume that the orientations of $S_1$ and $S_2$ agree on
$P$.  Let $L = \partial( S_1\cup S_2)$ be the link obtained as a
boundary of the union of the two surfaces. Then $L$ is said to be
obtained by \emph{Murasugi sum} from $L_1$ and $L_2$. If $k=1$, this
is connected sum, and if $k=2$, then this operation is known as
\emph{plumbing}.  There are two natural Seifert surfaces for $L$ shown
in Figure \ref{fig:murasugi}(b, c), given by $S= S_1\cup S_2$, and
$S'= \big((S_1\cup S_2) - P \big)\cup \overline{(S^2-P)}$. Note that
$S'$ is also a Murasugi sum of the surfaces
$(S_i-P)\cup \overline{S^2-P}$, which are isotopic to $S_i$.

\begin{figure}[tb]
\begin{tikzoverlay}[width=13cm]{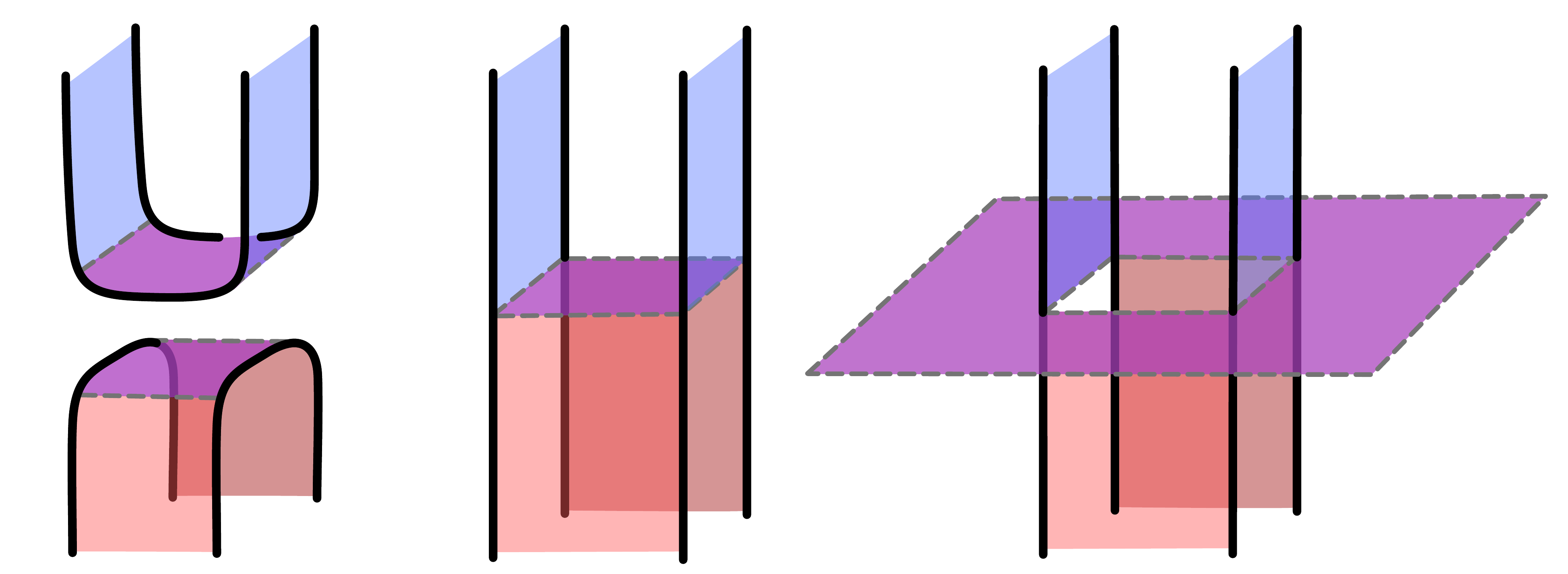}[font=\small]
    \node[] at (17.8,27.4) {$S_1$};
    \node[] at (8.25,4.6) {$S_2$};
    \node[left] at (4.4,23.9) {$L_1$};
    \node[left] at (4.6,7.8) {$L_2$};
    \node[left=2pt] at (31.5,29.2) {$L$};
    \node[] at (45.7,27.4) {$S$};
    \node[] at (39.1,18.5) {$P$};
    \node[left=2pt] at (66.5,29.2) {$L$};
    \node[] at (80.8,27.4) {$S'$};
    \node[] at (91.2,22.2) {$S^2 \setminus P$};
    \node[below=5pt] at (8.8,0.4) {(a) The initial surfaces};
    \node[below=5 pt] at (37.7,0.4) {(b) The surface $S$};
    \node[below=5 pt] at (73.1,0.4) {(c) The surface $S'$};
\end{tikzoverlay}
  \caption{The Murasugi sum with $k = 2$.}
  \label{fig:murasugi}
\end{figure}

Gabai showed that if each $S_i$ is minimal genus, then so is
$S$; similarly if each $S_i$ is a fiber, then so is $S$ \cite{Gabai1983}.
We generalize these results to:

\begin{lemma} \label{lem:murasugi} If $S_1$ and $S_2$ are libroid surfaces, and
  $S$ is obtained from $S_1$ and $S_2$ by Murasugi sum, then $S$ is also a
  libroid surface. 
\end{lemma}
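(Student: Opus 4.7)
The plan is to interpret a Murasugi sum as a sutured-manifold gluing along a product $2k$-gon, and then show that the libroid property is preserved under this operation.

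First I would set up the sutured manifold framework. Let $M := X \setminus N(S)$, and set $M_i := X_i \setminus N(S_i)$ where $X_i = S^3 \setminus N(L_i)$. The sphere $S^2 = B_1 \cap B_2$ meets $S$ in the polygon $P$, and its complementary disk $D := S^2 \setminus \interior(P)$ sits in $M$ as a properly embedded disk whose boundary meets the suture $\gamma = \partial S$ in exactly $2k$ points, with the arcs of $\partial D \setminus \gamma$ alternating between $R_+^M$ and $R_-^M$; that is, $D$ is a product $2k$-gon in $M$. Following Gabai's analysis of Murasugi sum, cutting $M$ along $D$ recovers the sutured manifold $M_1 \sqcup M_2$.

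By hypothesis each $M_i$ is a library: there is a taut surface $\Sigma_i \subset M_i$ with $[\Sigma_i] = n_i [R_+^{M_i}]$ such that $M_i \setminus \Sigma_i$ is a disjoint union of books of $I$-bundles. By first adjoining extra parallel copies of $R_+^{M_i}$ to each $\Sigma_i$ --- using the observation, a consequence of Lemma~\ref{lem:Ibundle}, that cutting a book of $I$-bundles along a boundary-parallel copy of $R_+$ leaves a disjoint union of books of $I$-bundles --- I may assume $n_1 = n_2 = n$. I would then isotope $\Sigma_1$ and $\Sigma_2$ to be disjoint from $D$ in their respective $M_i$ (arcs of intersection can be pushed off using the product structure on $D$ without disturbing the decomposition), and set $\Sigma := \Sigma_1 \sqcup \Sigma_2 \subset M$; this gives a taut surface with $[\Sigma] = n[R_+^M]$.

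It remains to show that $M \setminus \Sigma$ is a disjoint union of books of $I$-bundles. Since $\Sigma$ is disjoint from $D$, the decomposition of $M$ by $D$ descends to $M \setminus \Sigma$, which is thus obtained by gluing pieces of $M_1 \setminus \Sigma_1$ to pieces of $M_2 \setminus \Sigma_2$ along sub-product-$2k$-gons inherited from $D$. The principal technical step --- and the main obstacle --- is therefore the following lemma: the gluing of two books of $I$-bundles along a product $2k$-gon is itself a book of $I$-bundles. I would prove this by case analysis on whether the arcs of the gluing boundary that lie in $R_\pm$ sit on binding-torus annuli or on page horizontal boundaries; in each case the gluing disk is absorbed into the existing binding/page structure, so the result remains a book of $I$-bundles. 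Combined with the earlier steps, this establishes that $\Sigma$ exhibits $S$ as a libroid surface.
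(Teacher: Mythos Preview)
There is a genuine gap in your step 3.  You propose to set $\Sigma = \Sigma_1 \sqcup \Sigma_2$ after ``pushing $\Sigma_i$ off $D$'', but this cannot produce a properly embedded surface in $(M, N(\gamma))$.  The point is that the sutures $\gamma_i = L_i$ of $M_i$ contain the edges of $\partial P$ lying in $L_i$; under the identification of the cut piece with $M_i$, these edge-arcs correspond to arcs in the \emph{interior} of $R_\pm(M)$, not to arcs near $\gamma = L$.  Since $\partial\Sigma_i$ runs parallel to $\gamma_i$, it is forced to make excursions along these edge-arcs, and these excursions are not boundary--parallel in $R_\pm(M)$ in general.  A clean way to see that no isotopy can repair this: if $\Sigma_1\sqcup\Sigma_2$ \emph{were} a taut surface in $(M,N(\gamma))$ representing $n[R_+^M]$, its complexity would be
\[
\chi_-(\Sigma_1)+\chi_-(\Sigma_2)=n\big(\chi_-(S_1)+\chi_-(S_2)\big)=n\big(\chi_-(S)-1\big)<n\,\chi_-(S)=\|\,n[R_+^M]\,\|,
\]
contradicting the Thurston norm (here $\chi(S)=\chi(S_1)+\chi(S_2)-\chi(P)$ with $\chi(P)=1$).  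So for $n\ge 1$ the disjoint union is simply too small to represent the class.

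The paper avoids this problem by introducing the companion Seifert surface $S'$ rather than working only with the product $2k$-gon $D$.  One shows that $S$ and $S'$ can be made disjoint, and that each component of $S^3\setminus(S\cup S')$ is $(S^3\setminus S_i)$ with a product region $R_i$ attached along $k$ product disks.  The library decomposition of $M_i$ then extends \emph{through the product} $R_i$ to give a library decomposition of each component of $M\setminus S'$; taking $\Sigma$ to consist of $S'$ together with these extended $\Sigma_i$ yields the desired library structure on $M$.  This sidesteps the boundary-matching issue entirely, makes your step-4 lemma unnecessary, and does not require equalizing $n_1$ and $n_2$.  Your outline could be salvaged by Murasugi-summing $\Sigma_1$ and $\Sigma_2$ along $n$ parallel copies of $P$ (which restores the missing $n$ to the Euler characteristic), but carrying that out carefully amounts to reproducing the paper's construction.
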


\begin{figure}[tb]
\begin{tikzoverlay}[width=9cm]{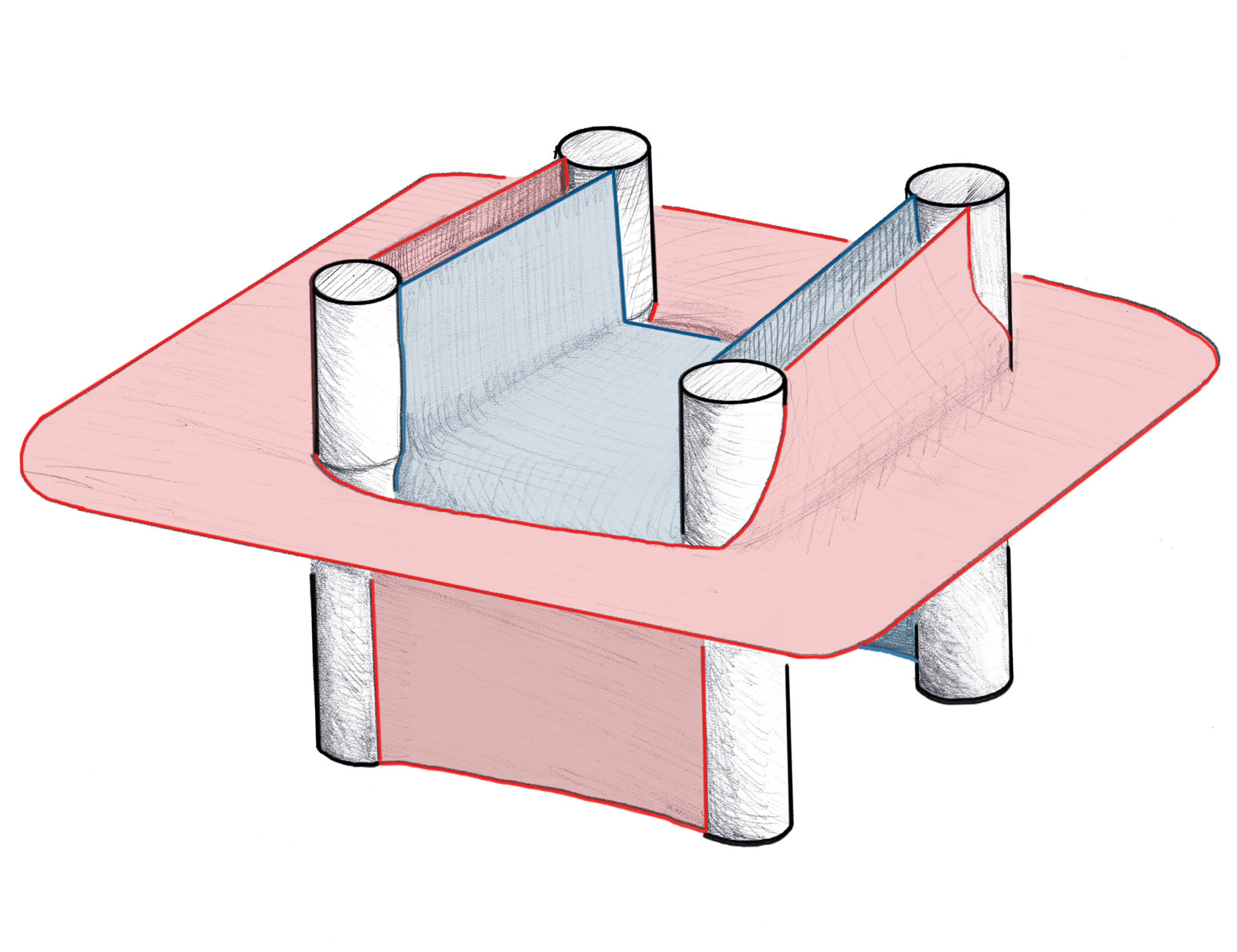}[font=\small]
    \node at (50.3,44.6) {$S$};
    \node at (15.0,42.0) {$S'$};
    \begin{scope}[line width=1pt, line cap=round, ->]
    \draw (60.4,72.6) .. controls  (50.8,74.5) and  (42.3,69.5) .. (43.9,61.0);
    \node[right] at (59.7,72.0) {$R_2$};
    \draw (66.3,71.2) .. controls (69.2,70.0) and (72.0,63.8) .. (72.0,56.5);

    \draw (74.0,8.8) .. controls (65.2,3.2) and (49.5,4.0) .. (49.4,10.5);
    \draw (74.0,13.1) .. controls (70.7,16.9) and (70.4,20.8) .. (70.5,23.6);

     \node at (77.0,10.8) {$R_1$};
    \end{scope}
\end{tikzoverlay}
  \caption{The surfaces $S$ and $S'$ made disjoint.}
  \label{fig:libroid}
\end{figure}

\begin{proof}
  The Seifert surfaces $S$ and $S'$ for $L$ can be disjointly embedded
  as sketched in Figure~\ref{fig:libroid}.  In detail, take a regular
  neighborhood $N(L)$, and form the exterior
  $E(L)=\overline{S^3-N(L)}$. We'll use the notation above for
  Murasugi sum. Then $S^2\cap E(L)$ is a $2k$-punctured sphere,
  dividing $E(L)$ into tangle complements $T_i= E(L)\cap B_i$.  Take a
  regular neighborhood $R_{3-i}$ of $\overline{S_i-P}\cap T_i$ inside
  $T_i$; then the relative boundary of $R_{3-i}$ in $T_i$ is two parallel copies of
  $\overline{S_i-P}$. The union with $S^2-(R_1\cup R_2)$ gives our two
  disjointly embedded Seifert surfaces $S\cup S'$.
  
  The complements $S^3-S_i$, $S^3-S$, and $S^3-S'$ naturally admit
  sutured manifold structures as described in Section 4 of
  \cite{Sakuma1994}.  Moreover, the two complementary regions
  $S^3-(S\cup S')$ may be identified with $(S^3-S_i) \cup R_i$, where
  $R_i$ is the product sutured manifold described above, and $R_i$ is
  attached to $S^3-S_i$ along $k$ product disks in the sutures
  corresponding to $R_i\cap S^2$ (recall $k$ is defined by
  $S_1\cap S_2=P$ is a $2k$-gon).  But $S^3-S_i$ is a library sutured
  manifold, which may be extended as products into $R_i$ to obtain a
  library decomposition of $S^3-(S\cup S')$.  Thus $S$ and $S'$ are
  libroid Seifert surfaces for $L$.
\end{proof}

\begin{remark}
  The sutured manifold decomposition in the above proof is the same as
  that in \cite[Condition 4.2]{Sakuma1994}; while we first decompose
  along $S\cup S'$ and then along the $2k$ product disks and remove
  the product sutured manifolds $R_i$, Sakuma first decomposes along
  $S$ and then along the disk $S^2-P$, resulting in the union of the
  sutured manifolds $S^3-S_i$.
\end{remark}

\begin{figure}[tbh]

\begin{tikzpicture}[font=\small]
  \begin{scope}[anchor=north west, inner sep=0]
    \node at (0, 9.5)  {\includegraphics[width=3cm]{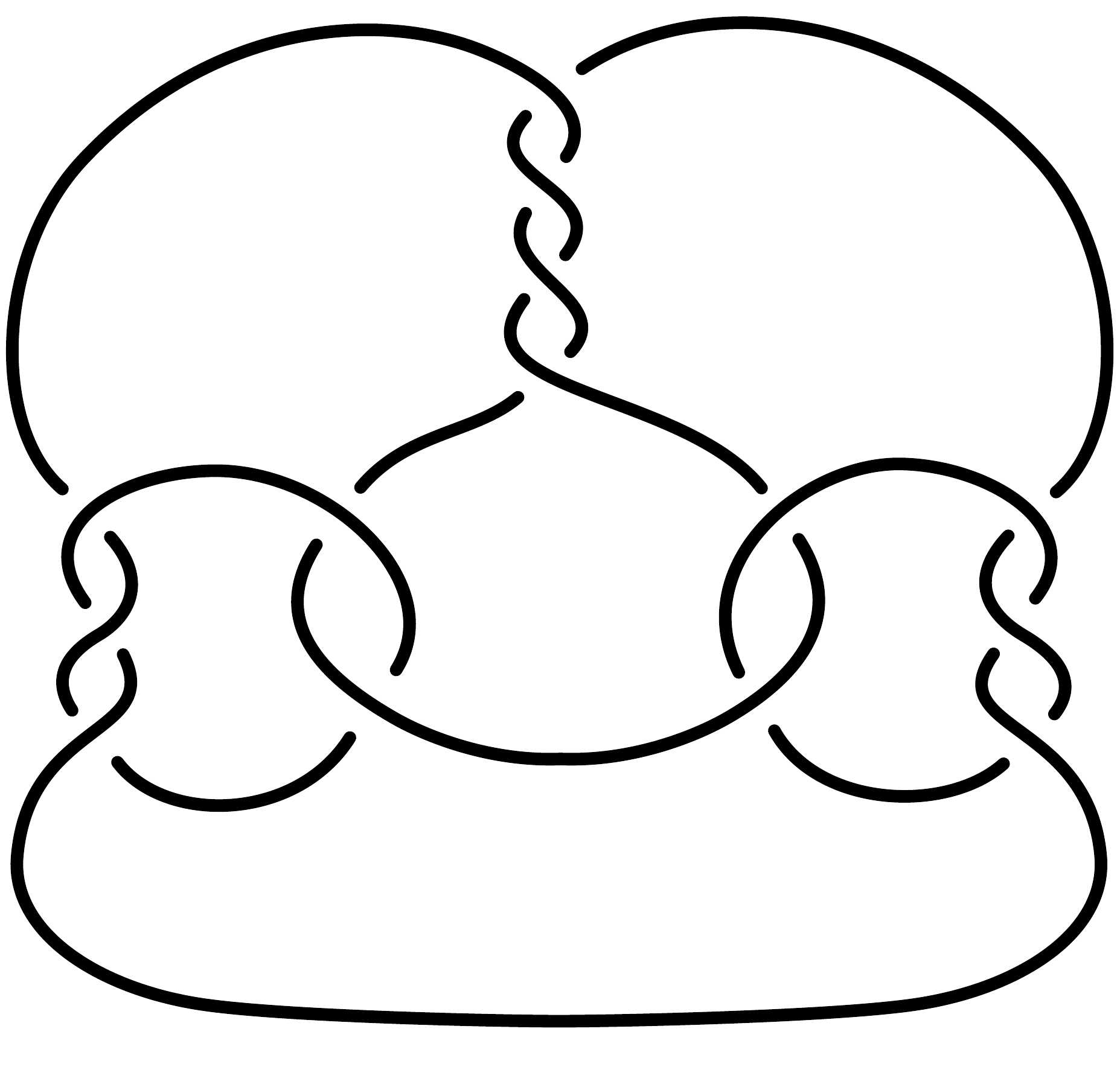}};
    \node at (4.5, 9)  {\includegraphics[width=3cm]{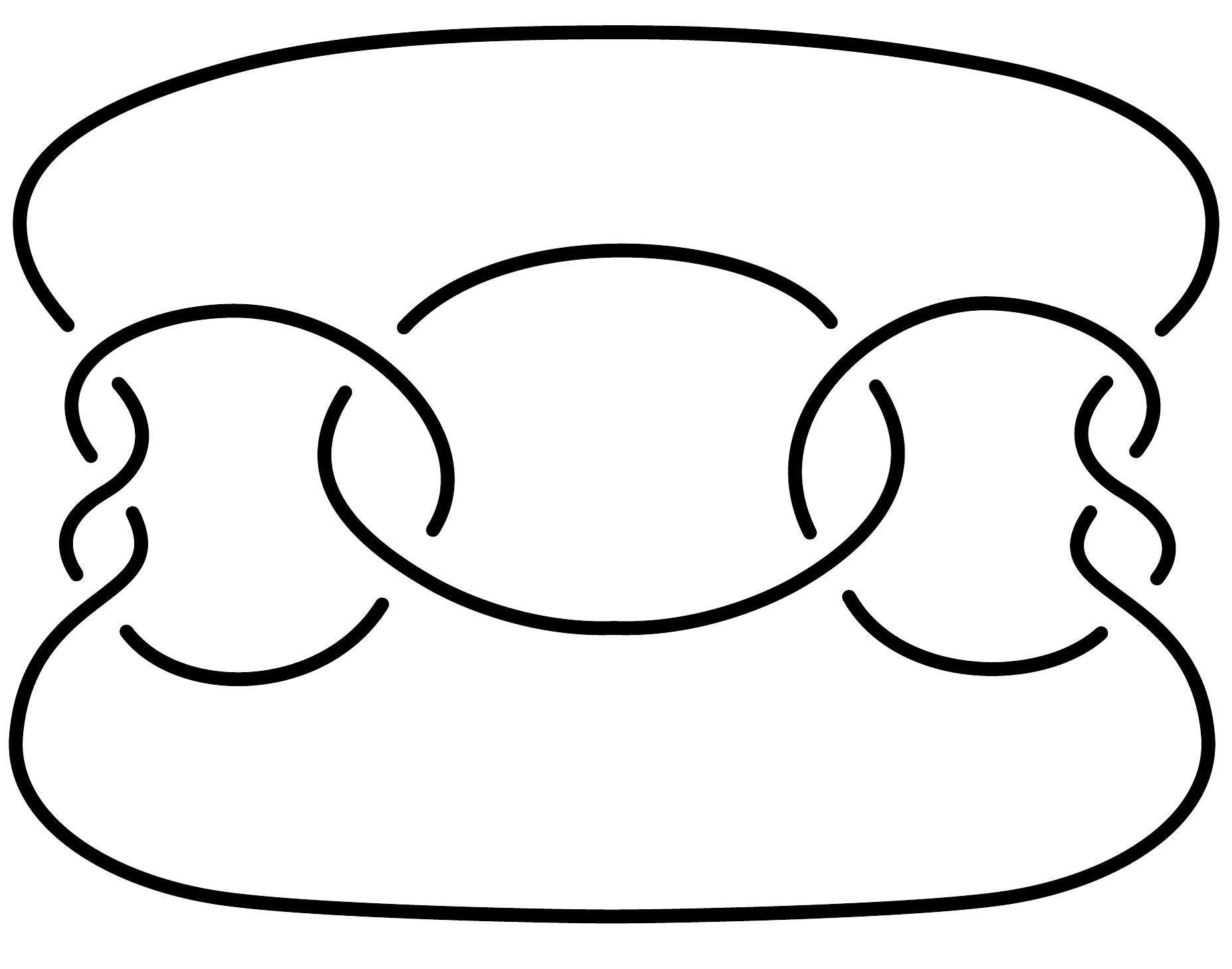}};
    \node at (9, 10) {\includegraphics[width=5cm]{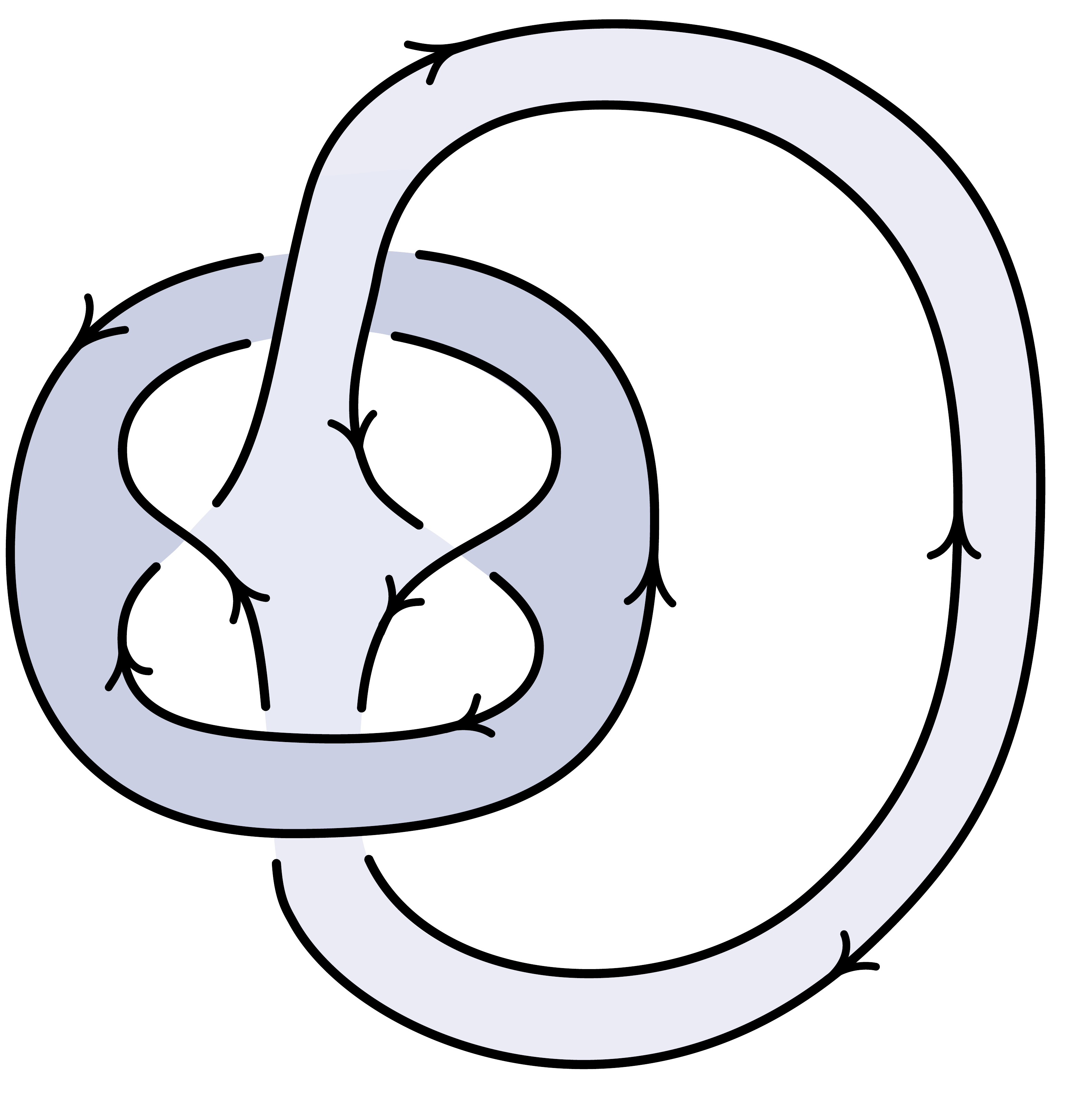}};
    \node at (0.3, 5.25) {\includegraphics[width=7cm]{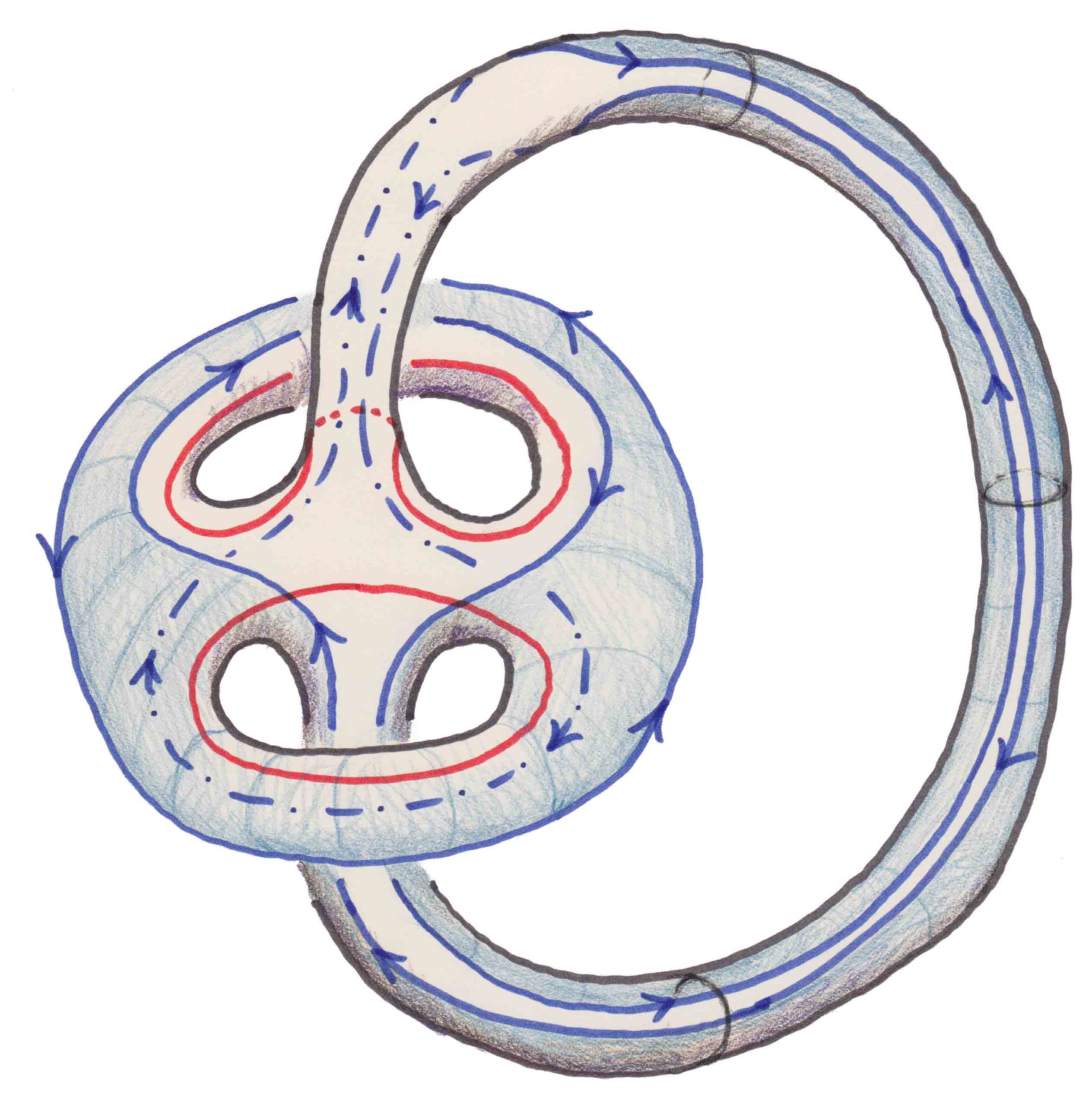}};
    \node at (8.6, 3.75) {\includegraphics[width=4.75cm]{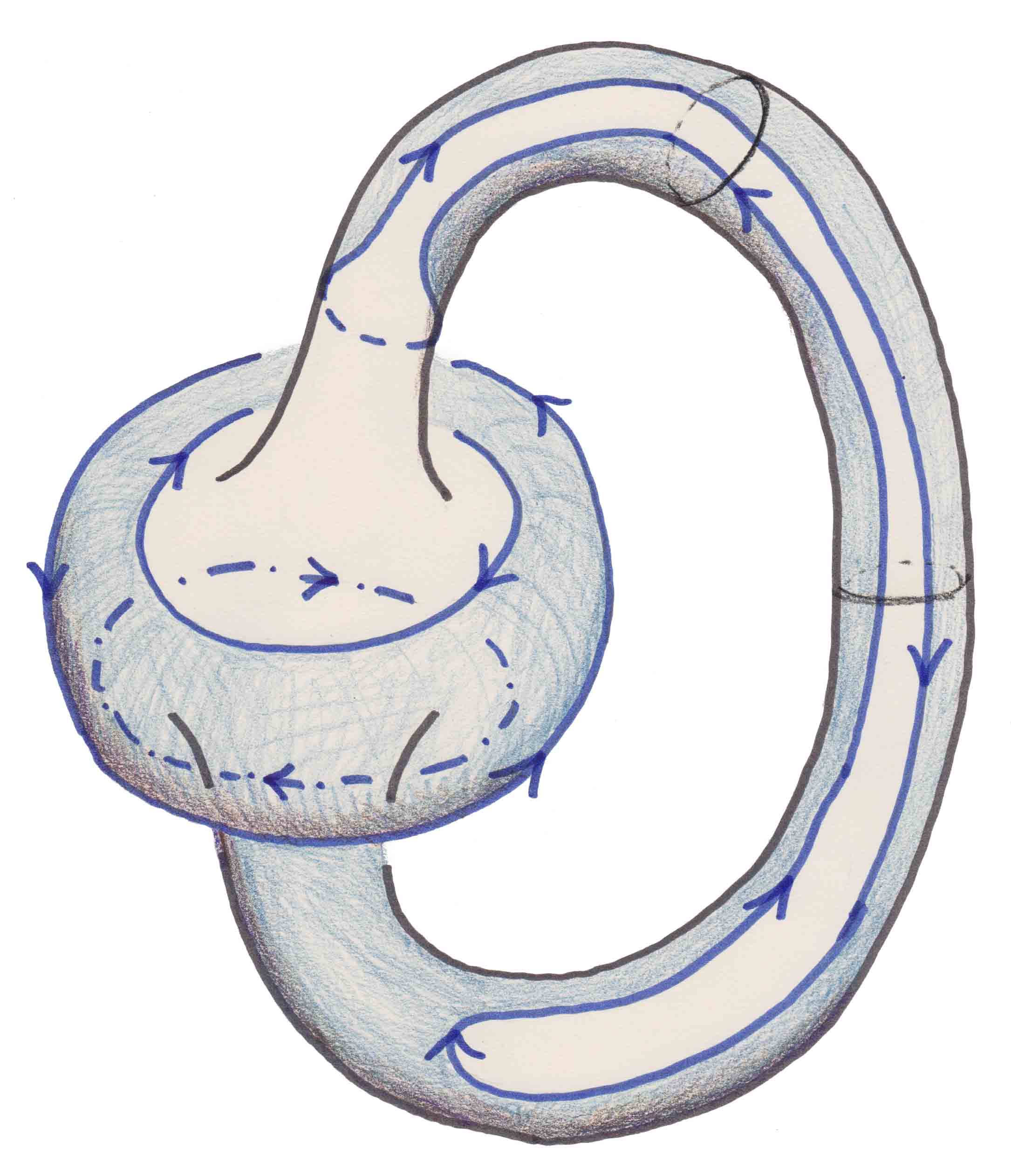}};
  \end{scope}
  \begin{scope}[line width=1pt, line cap=round, ->]
    \draw (9.25, 9.65) .. controls (9.55, 9.60) and (9.75, 9.1) .. (9.8, 8.85);
    \draw (7.1, 9.65)  .. controls (6.7, 9.60) and (6.3, 9.4) ..  (6.1, 9.1);
  \end{scope}
  
  \node at (1.5, 6.1) {(a)  The knot $\KT{2}{n}$.};
  \node at (6, 6.1) {(b)  The pretzel link $L$.};
  \node[right=3pt] at (1.5, 8.9) {$2n$};
  \node at (8.2, 9.7) {plumb here};
  \node at (11.5, 4.7) {(c) The surface $S$.};
  \node at (4, -2.2) {(d) $M$ viewed from inside.};
  \node at (11.5, -2.2) {(e) $T$ viewed from inside.};

\end{tikzpicture}

\caption{Kinoshita-Terasaka knots and the proof of
  Theorem~\ref{thm:KT}.}
\label{fig:KT}

\end{figure}

The class of \emph{arborescent links} are those obtained by
plumbing together twisted bands in a tree-like pattern (see
e.g.~\cite{Gabai1986, BonahonSiebenmann} for a definition).  It is
important to note that the bands are allowed to have an odd number of
twists.  With a few known exceptions, these links are hyperbolic (see
\cite{BonahonSiebenmann} or \cite[Theorem 1.5]{FuterGueritaud2009}).
The subclass of \emph{special arborescent links} studied by Sakuma
\cite{Sakuma1994} are those obtained by plumbing bands with even
numbers of twists, and hence the plumbed surface is a Seifert surface
for the link.  Inductively applying Lemma~\ref{lem:murasugi} shows
that all \emph{special} arborescent knots are libroid.  A famous
family of non-special arborescent knots are the Kinoshita-Terasaka
knots; to complete the proof of Theorem~\ref{thm:hyplib}, it suffices
to show:

\begin{theorem}\label{thm:KT}
  The Kinoshita-Terasaka knots $\KT{2}{n}$ shown in
  Figure~\ref{fig:KT}(a) are libroid hyperbolic knots with trivial
  ordinary Alexander polynomial.
\end{theorem}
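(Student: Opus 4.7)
The plan is to verify separately the three properties claimed for $\KT{2}{n}$: that it is hyperbolic, has trivial Alexander polynomial, and admits a libroid minimal genus Seifert surface. The first two are essentially classical or routine; the libroid property is the main content.

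For the Alexander polynomial, I would compute directly from a Seifert matrix associated to the surface $S$ of Figure~\ref{fig:KT}(c). With a basis for $H_1(S)$ adapted to the depicted plumbing structure, the Seifert matrix $V$ has a block form in which the twist parameter $n$ appears in a single block, and a direct expansion shows $\det(tV - V^T) \equiv 1$ for every $n$, generalizing the classical Kinoshita--Terasaka calculation. Alternatively, one may appeal to the well-known fact that the family $\KT{2}{n}$ arises from the unknot by iterated Conway mutation on tangles, an operation that preserves the Alexander polynomial.

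For hyperbolicity, note that each $\KT{2}{n}$ is an arborescent knot (though not a \emph{special} one, since some plumbing bands carry an odd number of twists). It is visibly not one of the small list of non-hyperbolic arborescent knots: it admits no essential torus and is not a torus knot for any $n \geq 1$. Hence either the Bonahon--Siebenmann classification \cite{BonahonSiebenmann} or \cite[Theorem 1.5]{FuterGueritaud2009} yields that $S^3 \setminus \KT{2}{n}$ admits a finite-volume complete hyperbolic structure.

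The bulk of the work is the libroid property. The plan is to realize $S$ of Figure~\ref{fig:KT}(c) as a minimal genus Seifert surface by exhibiting it as a Murasugi sum of a Seifert surface for the pretzel link $L$ of Figure~\ref{fig:KT}(b) with an auxiliary surface in the region marked ``plumb here''. Since $L$ is a special arborescent link, iterating Lemma~\ref{lem:murasugi} shows that any such Seifert surface for $L$ is libroid; if the plumbed piece is itself a libroid Seifert surface, a final application of Lemma~\ref{lem:murasugi} completes the proof. In the remaining case one must instead verify the library decomposition of $M = S^3 \setminus N(S)$ directly: produce an auxiliary taut surface $\Sigma \subset M$ with $[\Sigma] = n[\Rp]$ for some $n \geq 1$ cutting $M$ into the two pieces of Figures~\ref{fig:KT}(d) and (e), namely a handlebody and a solid torus $T$, each equipped with an explicit book of $I$-bundles structure.

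The main obstacle is this last combinatorial verification. Concretely, in Figure~\ref{fig:KT}(d) one needs to identify each page as a product $I$-bundle over a planar surface and each binding as a solid torus, and then check that every non-free vertical annulus is attached to an essential annulus in the adjacent binding and that every free vertical annulus has core exactly a component of the suture set; the piece $T$ of Figure~\ref{fig:KT}(e) is a book of $I$-bundles with a single binding and no pages. Once this decomposition is shown to meet the definition from Section~\ref{sec:libdef}, together with tautness of $\Sigma$ (which follows from $S$ being minimal genus and $\Sigma$ being homologous to an integer multiple of $S$), the conclusion follows immediately.
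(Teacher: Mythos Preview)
Your plan for hyperbolicity and the Alexander polynomial is fine and matches the paper's treatment (which simply cites \cite{KinoshitaTerasaka1957} and the arborescent classification). The libroid argument, however, has a genuine gap.

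You assert that the pretzel link $L = P(3,-2,2,-3)$ of Figure~\ref{fig:KT}(b) is special arborescent, and hence libroid by iterating Lemma~\ref{lem:murasugi}. This is not true: special arborescent requires plumbing bands with \emph{even} numbers of twists, and here two of the pretzel parameters are odd. Indeed the whole point of this theorem in the paper is to exhibit libroid knots \emph{outside} the special arborescent class, and the KT knots inherit their odd plumbing bands precisely from $L$. So the shortcut ``$L$ special arborescent $\Rightarrow$ $S$ libroid'' is unavailable, and you must analyze the sutured complement of $S$ directly.

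Your fallback description also misreads the figures. Figures~\ref{fig:KT}(d) and (e) are not two pieces of a library decomposition separated by some auxiliary $\Sigma$ with $[\Sigma] = n[\Rp]$. Figure~(d) depicts a thickening of $S$ viewed from inside, so that the sutured manifold $M$ is the \emph{exterior} of this handlebody; the red curves are product discs in $M$. Decomposing $M$ along these product discs yields the sutured manifold $T$ of Figure~(e), which is a solid torus with four longitudinal sutures. Since $T$ is taut, so is $M$, and reattaching the product discs exhibits $M$ itself as a book of $I$-bundles with a single binding (essentially $T$). Thus the complement of the single surface $S$ is already a book of $I$-bundles (the $n=0$ case of the library definition), and one then plumbs on the $2n$-twisted band and applies Lemma~\ref{lem:murasugi} once. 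No auxiliary $\Sigma$ is needed; the missing idea in your proposal is exactly this product-disc decomposition of the pretzel complement.
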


\begin{proof}
  These knots are hyperbolic since they are arborescent and not one of
  the exceptional cases, and their Alexander polynomials were
  calculated in \cite{KinoshitaTerasaka1957}.  Minimal genus Seifert
  surfaces were found by Gabai \cite[\S 5]{Gabai1986}; we review
  his construction to verify that these knots are libroid.

  Let $L$ be the $(3, -2, 2, -3)$--pretzel link shown in
  Figure~\ref{fig:KT}(b); a Seifert surface $S$ for one orientation
  of $L$ is shown in Figure~\ref{fig:KT}(c).  The surface $S$ is a
  twice-punctured torus, and hence taut since $S^3 \setminus L$ is
  hyperbolic.  The $\KT{2}{n}$ knot can be obtained by plumbing a band
  with $2n$-twists onto $S$ in the location shown, so by
  Lemma~\ref{lem:murasugi} it suffices to prove that the complement of
  $S$ is a book of $I$-bundles.

  Thickening $S$ to a handlebody, we get the picture in
  Figure~\ref{fig:KT}(d); the outside of this handlebody is the
  sutured manifold $M$ we seek to understand.  Each short red curve
  meets the long blue oriented sutures in two points and bounds an
  obvious disk in $M$.  These are product discs in the sense of
  \cite{Gabai1986}, so we decompose along them to get the sutured
  manifold $T$ which is the \emph{exterior} of the solid torus shown
  in Figure~\ref{fig:KT}(e).  Note that $T$ is a solid torus with four
  sutures that each wind once around in the core direction.  In
  particular $T$ is taut and hence so is $M$; moreover, thinking
  backwards to build $M$ from $T$ by reattaching the product discs
  shows that $M$ is a book of $I$-bundles with a single binding which
  is basically $T$.
\end{proof}

{\RaggedRight \small
\bibliographystyle{nmd/math} 
\bibliography{\jobname}
}
\end{document}